\theoremstyle{theorem}
\newtheorem{thm}{Theorem}[section]
\newtheorem{prop}[thm]{Proposition}
\newtheorem{lem}[thm]{Lemma}
\theoremstyle{definition}
\newtheorem{exmp}[thm]{Example}
\newtheorem{rem}[thm]{Remark}
\newtheorem{thmwot}[thm]{}
\newtheorem{conv}[thm]{Convention}
\numberwithin{equation}{thm}
\newcommand{\Aut}{\operatorname{Aut}}
\newcommand{\Br}{\operatorname{Br}}
\newcommand{\Hom}{\operatorname{Hom}}
\newcommand{\Ind}{\operatorname{Ind}}
\newcommand{\Inn}{\operatorname{Inn}}
\newcommand{\Mod}{\operatorname{Mod}}
\newcommand{\ochar}{\operatorname{char}}
\newcommand{\oo}{\operatorname{o}}
\newcommand{\op}{\operatorname{op}}
\newcommand{\Perf}{\operatorname{Perf}}
\newcommand{\perf}{\operatorname{perf}}
\newcommand{\Res}{\operatorname{Res}}
\newcommand{\cF}{\mathcal{F}}
\newcommand{\cO}{\mathcal{O}}
\newcommand{\cU}{\mathcal{U}}
\newcommand{\grp}[1]{\langle#1\rangle}
\newcommand{\lsup}[1]{{\hspace{1pt}^{#1}\hspace{-0.5pt}}}
\newcommand{\lsub}[1]{{\hspace{1pt}_{#1}\hspace{-0.5pt}}}
\newcommand{\set}[1]{\{\,#1\,\}}
\newcommand{\Set}[1]{\left\{\,#1\,\right\}}
\newcommand{\UHom}{\underline{\Hom}}
\newcommand{\UPerf}{\underline{\Perf}}
\newcommand{\Uperf}{\underline{\perf}}
\title{Stable equivalence of Morita type and quotient fusion system
\footnote{supported by Sichuan Science and Technology Program (No. 2024NSFJQ0070) and by National Natural Science Foundation of China (No. 12271446).}}
\author{Conghui Li\footnote{Conghui Li, School of Mathematics, Southwest Jiaotong University, Chengdu 611756, China, Email: liconghui@swjtu.edu.cn}}
\date{}
\begin{document}

\setlength\abovedisplayskip{1ex plus 0.2ex minus 0.1ex}
\setlength\belowdisplayskip{1ex plus 0.2ex minus 0.1ex}

\maketitle

%%%%%%%%%%%%%%%%%%%%%%%%%%%%%%%%%%%%%%%%%%%%%%%%%%%%%%%%%
\begin{abstract}
In this note, we prove that stable equivalences of Morita type between blocks of finite groups induce identification of certain quotient fusion systems under sone assumption.
We also collect some related results for separable equivalences.

\textbf{2020 Mathematics Subject Classification:} 20C20.

\textbf{Keyword:} block of finite group, stable equivalence of Morita type, quotient fusion system, separable equivalence.
\end{abstract}

%%%%%%%%%%%%%%%%%%%%%%%%%%%%%%%%%%%%%%%%%%%%%%%%%%%%%%%%%
\section{Introduction}\label{intro}

Throughout this note, all groups are finite; any ring has an identity; modules mean left modules; all (left, right and bi-) modules are assumed to be \emph{finitely generated}; all $\cO$-algebras over a commutative ring $\cO$ are free of finite rank as $\cO$-modules.
The notation for representations of finite groups used in this note are standard; see for example \cite{Linck18a,Linck18b}.
For convenience, we make the following convention.

\begin{conv}\label{conv-O}
Let $p$ be a prime and $\cO$ be a complete discrete valuation ring with the residue field $k$ which is of characteristic $p$ and is large enough for all blocks involved in this note.
\end{conv}

Let $\cO$ be as in Convention \ref{conv-O} and $G$ be a group.
By the completeness assumption of $\cO$, the blocks of $\cO{G}$ correspond bijectively with those of $kG$, which are called blocks of $G$.
We will also identify blocks with their block idempotents by abuse of terminology.

R. Brauer induced defect groups as an important invariant for blocks; see \emph{e.g.} \cite[\S6.1, \S6.2]{Linck18b} for the definition, equivalent characterization and basic properties.
Then J.L. Alperin and M. Brou\'e introduced Brauer pairs as a system of local data for blocks; see \emph{e.g.} \cite[\S6.3]{Linck18b}.
L. Puig formulated the axioms of fusion systems, which can be used to organize these local data into a category; for an exposition of general theory for fusion systems, see \emph{e.g.} \cite{AschKesOliv11}, \cite{Craven11}, \cite[\S\S8.1--8.3]{Linck18b} or \cite{Puig06}.

Puig also introduced source algebras of blocks; see \emph{e.g.} \cite[\S6.4]{Linck18b} for definition and basic properties.
Source algebras are Morita equivalent to block algebras but are better compatible with the local data.
For example, by results of Puig \cite{Puig86}, the fusion systems of blocks can be determined by their source algebras; see \cite[Theorem 8.7.1]{Linck18b} for the statement and a proof.

Morita equivalences, derived equivalences and stable equivalences of Morita type between blocks of finite groups are widely used in the block theory of finite groups.
Morita equivalences obviously imply the derived equivalences and the derived equivalences imply stable equivalences of Morita type by a result of Rickard \cite{Rick91}.
In this note, we will consider stable equivalences of Morita type between blocks of finite groups.
The concept of stable equivalences of Morita type was introduced by Brou\'e \cite{Brou94};  see \cite[\S2.17, \S4.14]{Linck18a} for the definition and basic properties.

An important special class of stable equivalences of Morita type between blocks of finite groups are those induced by bimodules with endopermutation sources.
Such stable equivalences induce an identification of defect groups and fusion systems by a result of Puig \cite[7.6]{Puig99} (see also \cite[Theorem 9.11.2]{Linck18b}).
The main result of this note is that general stable equivalences of Morita type (not necessarily with endopermutation sources) between blocks of finite groups identify certain quotient fusion systems under some assumption.
For more about the notation in the following theorem, see \ref{notation-GH} and \ref{subgrp-direct-prod}.

\begin{thm}\label{mainthm-1}
Let $\cO$ be as in Convention \ref{conv-O} and $G_1,G_2$ be groups.
Assume $b_1$ ($b_2$, resp.) is a block of $\cO{G}_1$ ($\cO{G}_2$, resp.) with defect group $P_1$ ($P_2$, resp.).
Let $M$ be an $(\cO{G_1}b_1,\cO{G_2}b_2)$-bimodule inducing a stable equivalence between $\cO{G_1}b_1$ and $\cO{G_2}b_2$.
Then we may assume that $M$ has a vertex-source pair $(R,V)$ with $R\leq P_1 \times P_2$ and there are source idempotents $i_1 \in (\cO{G_1}b_1)^{P_1}$ and $i_2 \in (\cO{G_2}b_2)^{P_2}$ such that $M$ is isomorphic to a direct summand of
\[ \cO{G_1}i_1 \otimes_{\cO{P_1}} \Ind_R^{P_1\times P_2} V \otimes_{\cO{P_2}} i_2\cO{G_2}, \]
and the following hold for $M,R$ and the fusion system $\cF_1$ ($\cF_2$, resp.) of $\cO{G_1}b_1$ ($\cO{G_2}b_2$, resp.) on $P_1$ ($P_2$, resp.) determined by $i_1$ ($i_2$, resp.).
\begin{compactenum}[(1)]
\item
$M^*$ induces the inverse of the stable equivalence induced by $M$, and $M^*$ has $(R^\sharp,V^*)$ as a vertex-source pair.
\item
The defects of $b_1$ and $b_2$ are equal, \emph{i.e.} $|P_1|=|P_2|$.
\item
The projections $\pi_1\colon R \to P_1$ and $\pi_2\colon R \to P_2$ are both surjective.
\item
Set $R_1 = R\cap P_1$ and $R_2 = R\cap P_2$.
Then $|R_1|=|R_2|$ and there is an isomorphism of groups $\theta\colon P_1/R_1 \to P_2/R_2$ such that
\[ R = \Set{ (x_1,x_2) \in P_1 \times P_2 \mid x_2R_2 = \theta(x_1R_1) }, \]
and $R/(R_1R_2) \cong \Delta_\theta(P_1/R_1)$.
\item
$R_1$ is strongly $\cF_1$-closed if and only if $R_2$ is strongly $\cF_2$-closed.
\item
When the equivalent conditions in (5) hold, the isomorphism $\theta$ in (4) induces an isomorphism of fusion systems from $\cF_1/R_1$ to $\cF_2/R_2$.
\item
When $R_1=1=R_2$, there is an isomorphism of groups $\theta\colon P_1 \cong P_2$ which induces an isomorphism of fusion systems from $\cF_1$ to $\cF_2$.

\end{compactenum}
\end{thm}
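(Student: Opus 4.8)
The plan is to follow Puig's local analysis of bimodules inducing stable equivalences, keeping track of how much survives when the source of $M$ is \emph{not} assumed to be an endo-permutation module.

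First I would make the standard reductions. Discarding projective direct summands and using the basic theory of stable equivalences of Morita type between block algebras (\cite[\S2.17, \S4.14]{Linck18a}), one reduces to the case where $M$ is indecomposable and $M^*$ induces the inverse stable equivalence. Viewing $M$ as a module over $\cO[G_1\times G_2]$ via $(g_1,g_2)m=g_1mg_2^{-1}$, it lies in the block of $\cO[G_1\times G_2]$ whose defect groups are the subgroups conjugate to $P_1\times P_2$; choosing a vertex inside $P_1\times P_2$ and a source $V$ gives the pair $(R,V)$ with $R\le P_1\times P_2$, and dualizing shows $(R^\sharp,V^*)$ is a vertex--source pair of $M^*$, which is (1). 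For the source-idempotent factorization I would use that $\cO G_ib_i$, being relatively $\Delta P_i$-projective as a bimodule, is a direct summand of $\cO G_ib_i\otimes_{\cO P_i}\cO G_ib_i$, hence, after replacing the outer factors by their local constituents, of $\cO G_ii_i\otimes_{\cO P_i}i_i\cO G_i$. Tensoring these factorizations on either side of $M=b_1Mb_2$ gives $M\mid \cO G_1i_1\otimes_{\cO P_1}(i_1Mi_2)\otimes_{\cO P_2}i_2\cO G_2$; tracking vertices -- the constituents of $i_1Mi_2$ other than the vertex-$R$ one produce, after applying $\cO G_1i_1\otimes_{\cO P_1}(-)\otimes_{\cO P_2}i_2\cO G_2$, only modules with vertex properly smaller than $R$, so they cannot contribute the summand $M$ -- together with a possible adjustment of the source idempotents so that this constituent is $\Ind_R^{P_1\times P_2}V$, yields the stated factorization.

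Next, (2)--(4); this is where the stable equivalence, rather than the mere bimodule structure, is used. Since $\cO G_1b_1$ is isomorphic to a direct summand of $M\otimes_{\cO G_2}M^*$, and the latter is, by the Mackey-type formula for composites of induced bimodules, relatively projective with respect to the subgroups $R\ast{}^{(g,1)}R^\sharp$ ($g\in G_2$), where $R\ast S=\{(x,z)\mid (x,y)\in R,\ (y,z)\in S\text{ for some }y\}$, the vertex $\Delta P_1$ of $\cO G_1b_1$ is $(G_1\times G_1)$-subconjugate to some $R\ast{}^{(g,1)}R^\sharp$. Projecting onto the first coordinate and comparing orders forces $\pi_1(R)=P_1$; symmetrically $\pi_2(R)=P_2$, which is (3). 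Goursat's lemma then gives $R_1=R\cap P_1\trianglelefteq P_1$, $R_2=R\cap P_2\trianglelefteq P_2$, an isomorphism $\theta\colon P_1/R_1\to P_2/R_2$ with $R=\{(x_1,x_2)\mid x_2R_2=\theta(x_1R_1)\}$ and $R/(R_1R_2)\cong\Delta_\theta(P_1/R_1)$, which is (4) apart from $|R_1|=|R_2|$. Since $\theta$ already forces $[P_1:R_1]=[P_2:R_2]$, the equality $|R_1|=|R_2|$ is equivalent to (2), $|P_1|=|P_2|$, which I would deduce from the invariance of the non-unit elementary divisors of the Cartan matrix under stable equivalences of Morita type, the largest such divisor being the order of a defect group (alternatively from an $\cO$-rank count on $M\otimes_{\cO G_2}M^*\cong\cO G_1b_1\oplus(\text{projective})$).

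For (5)--(8), note first that (8) is the case $R_1=R_2=1$ of (7), since then $R_i$ is vacuously strongly closed and $\cF_i/1=\cF_i$. The fusion system $\cF_i$ is read off the source algebra $A_i=i_i\cO G_ii_i$ (Puig; \cite[Theorem 8.7.1]{Linck18b}): the indecomposable summands of $A_i$ as a $p$-permutation $\cO[P_i\times P_i]$-module have the twisted diagonal subgroups $\Delta_\varphi Q$, $\varphi\in\cF_i$, as vertices. Moreover $i_1Mi_2$ induces a stable equivalence of Morita type between $A_1$ and $A_2$, so $A_1$ is a direct summand of $(i_1Mi_2)\otimes_{A_2}(i_1Mi_2)^*$; substituting the factorization of the first step and resolving the middle $A_2$ into its $p$-permutation constituents, this bimodule is relatively projective with respect to subgroups of the form $\Delta_{\varphi_1}Q_1\ast R\ast\Delta_\psi Q'\ast R^\sharp\ast\Delta_{\varphi_2}Q_2$ with $\varphi_1,\varphi_2\in\cF_1$, $\psi\in\cF_2$. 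Hence every vertex $\Delta_\varphi Q$ of $A_1$ is subconjugate to such a subgroup, and a direct computation of the composite relation shows this forces $\theta\bar\varphi\theta^{-1}$ (the descent of $\varphi$ to $P_1/R_1$, defined once $R_1$ is strongly $\cF_1$-closed) to be a restriction of $\bar\psi$, hence a morphism of $\cF_2/R_2$; running the argument with $M^*$ gives the converse inclusion, proving (6)--(7). Finally (5) -- strong closure of $R_1$ in $\cF_1$ holds iff strong closure of $R_2$ in $\cF_2$ holds -- I would obtain by a symmetric argument exploiting that the roles of $(G_1,b_1,M)$ and $(G_2,b_2,M^*)$ are interchangeable, the vertex of $M^*$ being $R^\sharp$.

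The main obstacle is (5)--(7), and the essential difficulty is that $M$ is not a $p$-permutation bimodule: in the endo-permutation-source case (\cite[7.6]{Puig99}, \cite[Theorem 9.11.2]{Linck18b}) one identifies the full fusion systems by an induction in which $M$ itself is localized, using that $\Delta\mapsto M(\Delta)$ is multiplicative and again produces stable equivalences between local blocks, and none of this is available here; one can only use the local structure visible through the $p$-permutation bimodules $\cO G_ib_i$ sitting inside $M\otimes_{\cO G_2}M^*$ and $M^*\otimes_{\cO G_1}M$. Making precise that what survives is \emph{exactly} $\cF_i/R_i$ with $R_i=R\cap P_i$, neither more nor less, and that strong closure transfers between the two sides, is the heart of the argument.
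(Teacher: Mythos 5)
Your outline for parts (1)--(4) is essentially the paper's route: reduce to $M$ indecomposable with $M^*$ the inverse, place a vertex $R\le P_1\times P_2$, use the factorization through $\cO G_1i_1\otimes_{\cO P_1}(\,\cdot\,)\otimes_{\cO P_2}i_2\cO G_2$, deduce surjectivity of the projections from the fact that $\cO G_1b_1$ (with vertex $\Delta P_1$) is a summand of $M\otimes_{\cO G_2}M^*$ together with a Mackey/Bouc vertex analysis, apply Goursat, and get $|P_1|=|P_2|$ from the elementary divisors of the Cartan matrices. No issues there, and the derivation $|R_1|=|R_2|\Leftrightarrow|P_1|=|P_2|$ via $\theta$ is a clean way to fold (2) and part of (4) together.

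The gap is in part (5). You write that the equivalence ``$R_1$ strongly $\cF_1$-closed $\Leftrightarrow$ $R_2$ strongly $\cF_2$-closed'' is obtained ``by a symmetric argument exploiting that the roles of $(G_1,b_1,M)$ and $(G_2,b_2,M^*)$ are interchangeable.'' Symmetry only reduces the iff to proving one implication; it does not supply that implication, and you have given no argument for it. This is the content of the paper's Proposition \ref{prop-strongly-closed}, and it is not trivial: starting from $\varphi_2\in\Aut_{\cF_2}(T_2)$ with $S_2\le R_2$, one passes (via Alperin's fusion theorem) to a fully centralized $T_2$, uses \cite[Theorem 8.7.1(iii)]{Linck18b} to realize $\lsub{\varphi_2}\cO T_2$ as a summand of $i_2\cO G_2i_2$, pushes this through $\Ind_{R^\sharp}^{P_2\times P_1}V^*\otimes_{\cO P_1}i_1\cO G_1 i_1\otimes_{\cO P_1}\Ind_R^{P_1\times P_2}V$, applies \cite[Theorem 8.7.1(i)]{Linck18b} to $i_1\cO G_1i_1$, and then computes via Bouc's formula (Theorem \ref{Bouc}) the composite subgroups, using the structure of $R$ from Proposition \ref{prop-first}, to force $\varphi_2(S_2)\le R_2$. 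None of this is in your sketch.

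For parts (6)--(7) the general mechanism you describe (resolve the source algebra sandwiched in $M^*\otimes M$ into its $p$-permutation constituents, compare vertices of composite subgroups, conclude fusion) is the right one, but a key technical ingredient is missing: you need the quotient analogue of Puig's local fusion theorem, which the paper isolates as Theorem \ref{thm-quot-fusion-source-alg} (that $\cF/R$ is determined by the $(\cO\bar P,\cO\bar P)$-bimodule $\cO\bar P\otimes_{\cO P}i\cO Gi\otimes_{\cO P}\cO\bar P$). Without this, the phrase ``a direct computation of the composite relation shows this forces $\theta\bar\varphi\theta^{-1}$ to be a restriction of $\bar\psi$'' is not backed by any mechanism for passing from a statement about twisted diagonal vertices inside $P_1\times P_2$ to a statement about morphisms of the quotient fusion systems on $P_1/R_1$ and $P_2/R_2$. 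The paper's proof actually works on the quotient side from the start: it takes $\bar\varphi\in\Aut_{\bar\cF_2}(\bar T_2)$, realizes $\lsub{\bar\varphi}\cO\bar T_2$ as a summand of $\cO\bar T_2\otimes_{\cO T_2}i_2\cO G_2i_2\otimes_{\cO T_2}\cO\bar T_2$ via Theorem \ref{thm-quot-fusion-source-alg}(2), pushes through the $M^*\otimes M$ factorization, restricts and applies Bouc's formula and Lemma \ref{lem-res-ind-quotients} to kill $R_1,R_2$, identifies the resulting vertex as $\Delta_{\theta\circ\bar\psi'\circ\theta^{-1}}\bar T_2$ for some $\bar\psi'\in\bar\cF_1$, and concludes with Green's indecomposability and Lemma \ref{lem-inn-P}. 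You should supply a concrete version of this chain (or an alternative) rather than appealing to a ``direct computation.''
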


We also collect some related results when $\cO$ is of characteristic $0$ concerning separable equivalences, which was first defined by L. Kadison \cite{Kad95} and was used by M. Linckelmann in \cite{Linck11}.
The proof for these results are just slight generalizations of some arguments of Linckelmann in \cite[\S4.13]{Linck18a}

\begin{thm}\label{mainthm-2}
Let $\cO$ be as in Convention \ref{conv-O} and $G_1,G_2$ be groups.
Let $b_1$ ($b_2$, resp.) be a block of $\cO G_1$ ($\cO G_2$, resp.) with defect group $P_1$ ($P_2$, resp.).
\begin{compactenum}[(1)]
\item
Assume $\cO{G}_1b_1$ is separably equivalent to $\cO{G}_2b_2$, then the following hold.
\begin{compactenum}[(1.1)]
\item
$\cO{P}_1$ is separably equivalent to $\cO{P}_2$;
\item
When $\cO$ is of characteristic $0$, then the defects of $b_1$ and $b_2$ are equal, \emph{i.e.} $|P_1|=|P_2|$;
in particular, if $\cO{P}_1$ is separably equivalent to $\cO{P}_2$ and $\cO$ is of characteristic $0$, then $|P_1|=|P_2|$.
\end{compactenum}
\item
When $\cO$ is of characteristic $0$ and $\cO{G}_1b_1$ is stably equivalent of Morita type to $\cO{G}_2b_2$, the following hold with the notation in Theorem \ref{mainthm-1}.
\begin{compactenum}[(2.1)]
\item
$\cO{P}_1$ is separably equivalent to $\cO{P}_2$ via $\Ind_R^{P_1\times P_2}V$ and its dual.
\item
$\cO{R}_1$ is separably equivalent to $\cO{R}_2$ via $V$ and $V^*$.
\end{compactenum}
\end{compactenum}
\end{thm}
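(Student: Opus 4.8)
\emph{Proof plan.}
I will follow Linckelmann's treatment of separable equivalences in \cite[\S4.13]{Linck18a}, slightly generalised, and feed Theorem \ref{mainthm-1} into part (2). The only formal facts about separable equivalence I use are that it is transitive (compose the bimodules by tensoring) and that a bimodule realising one may be replaced by any direct summand that still dominates the relevant identity bimodule.

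\emph{Part (1.1).} Recall first the standard fact that $\cO G_jb_j\sim\cO P_j$ for $j=1,2$. Indeed, $\cO G_jb_j$ is Morita, hence separably, equivalent to its source algebra $A_j=i_j\cO G_jb_ji_j$; and $A_j\sim\cO P_j$ because $\cO P_j$ embeds in $A_j$ as a unital subalgebra over which $A_j$ is free on both sides, $\cO P_j$ is a direct summand of $A_j$ as an $(\cO P_j,\cO P_j)$-bimodule (the map $a\mapsto i_jai_j$ is an $\cO[P_j\times P_j]$-linear idempotent on the permutation module $\Res_{P_j\times P_j}^{G_j\times G_j}\cO G_j$, so $A_j$ is a $p$-permutation $\cO[P_j\times P_j]$-module with $\Br_{\Delta P_j}(A_j)\neq 0$), and $A_j$ is a direct summand of $A_j\otimes_{\cO P_j}A_j$ as an $(A_j,A_j)$-bimodule because $i_j$ is a source idempotent; thus $A_j$, with its two module structures, realises $\cO P_j\sim A_j$. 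Transitivity now gives
\[ \cO P_1\ \sim\ \cO G_1b_1\ \sim\ \cO G_2b_2\ \sim\ \cO P_2 \]
from the hypothesis $\cO G_1b_1\sim\cO G_2b_2$, proving (1.1).

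\emph{Part (1.2).} By (1.1) it suffices to prove: if $\cO P_1\sim\cO P_2$ and $\cO$ has characteristic $0$, then $|P_1|=|P_2|$. This is the heart of the statement and the one place where characteristic $0$ is genuinely needed: with no $p$-permutation or endopermutation hypothesis on the bimodules, one has essentially no control on defects after reduction to the residue field $k$, so one must exploit the integral structure together with the semisimplicity of $KP_j$, where $K=\operatorname{Frac}(\cO)$. Since $\cO P_j$ is a local $\cO$-algebra, the bimodules realising the equivalence are $\cO$-free, and I would extract from ``$\cO P_1\mid X\otimes_{\cO P_2}Y$'' and ``$\cO P_2\mid Y\otimes_{\cO P_1}X$'' the numerical relations among the various $\cO$-ranks, compare them over $K$ where $\cO P_j$ becomes a separable $K$-algebra, and so pin down $|P_j|$ itself (a power of $p$), whence $|P_1|=|P_2|$. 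I expect this to go essentially as in \cite[\S4.13]{Linck18a}. This is the step I anticipate being the main obstacle, and the one most sensitive to which precise version of the definition of separable equivalence is in force.

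\emph{Parts (2.1) and (2.2).} Now $\cO$ has characteristic $0$, $\cO G_1b_1$ is stably equivalent of Morita type to $\cO G_2b_2$, and we use the notation of Theorem \ref{mainthm-1}. The bimodule $i_1Mi_2$ induces a stable equivalence of Morita type between the source algebras $A_1$ and $A_2$, and multiplying the factorisation of $M$ in Theorem \ref{mainthm-1} on the left by $i_1$ and on the right by $i_2$ gives
\[ i_1Mi_2\ \mid\ A_1\otimes_{\cO P_1}\Ind_R^{P_1\times P_2}V\otimes_{\cO P_2}A_2 . \]
Composing the separable equivalences $\cO P_1\sim A_1$, $A_1\sim A_2$ (a stable equivalence is separable) and $A_2\sim\cO P_2$, and tracking bimodules through the composition using that $\cO P_j$ is a bimodule summand of $A_j$, I obtain a separable equivalence $\cO P_1\sim\cO P_2$ realised by $\Ind_R^{P_1\times P_2}V$ and its dual $\Ind_{R^\sharp}^{P_2\times P_1}V^*$. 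Concretely, since $\pi_1,\pi_2\colon R\to P_j$ are surjective by Theorem \ref{mainthm-1}(3), the Mackey formula for bimodules collapses to a single double coset and gives
\[ \Ind_R^{P_1\times P_2}V\otimes_{\cO P_2}\Ind_{R^\sharp}^{P_2\times P_1}V^*\ \cong\ \Ind_{R*R^\sharp}^{P_1\times P_1}W , \]
where, by the explicit description of $R$ in Theorem \ref{mainthm-1}(4), one has $R*R^\sharp=\{(x,x')\in P_1\times P_1\mid xR_1=x'R_1\}\supseteq\Delta P_1$; the fact that $M\otimes_{\cO G_2b_2}M^*\cong\cO G_1b_1\oplus(\text{projective bimodule})$ then forces $\cO P_1$ to occur as a summand of the left-hand side, and the symmetric computation settles the other composite, giving (2.1). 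For (2.2) one runs the same argument one level down: by Theorem \ref{mainthm-1}(4) we have $R_1\times R_2\leq R$, so $V$ is an $(\cO R_1,\cO R_2)$-bimodule and is the identity-double-coset summand of $\Res_{R_1\times R_2}^{P_1\times P_2}\Ind_R^{P_1\times P_2}V$; restricting the separable equivalence of (2.1) along $R_j\leq P_j$ then yields $\cO R_1\mid V\otimes_{\cO R_2}V^*$ and $\cO R_2\mid V^*\otimes_{\cO R_1}V$, so that $V$ and $V^*$ realise $\cO R_1\sim\cO R_2$.
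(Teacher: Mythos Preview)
Your (1.1) is correct and matches the paper. For (1.2) you point at the right reference but misidentify the mechanism: it is not a comparison of $\cO$-ranks over $K$, but the invariant $d_A$, the least $d$ such that $\pi^d\cdot\UHom_A(U,U')=0$ for all $A$-modules $U,U'$. This exists whenever $K\otimes_\cO A$ is semisimple, satisfies $d_B\le d_A$ whenever $B$ separably divides $A$ (Lemma~\ref{lem-sep-div-d}), and for $A=\cO P$ recovers the $\pi$-valuation of $|P|$ (\cite[Corollary 4.13.21]{Linck18a}). You should internalise this argument, because it is exactly the missing ingredient in your (2.1) and (2.2).

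There is a genuine gap in (2.1): your route never invokes characteristic~$0$, and it cannot be completed without it. Composing the separable equivalences does give $\cO P_1\sim\cO P_2$ via $i_1Mi_2$, and indeed $\Ind_R V\mid i_1Mi_2$; but a proper summand of a bimodule realising a separable equivalence need not itself realise one, so the sentence ``$M\otimes M^*\cong\cO G_1b_1\oplus\text{(proj)}$ forces $\cO P_1$ to occur as a summand of $\Ind_R V\otimes_{\cO P_2}\Ind_{R^\sharp}V^*$'' is unjustified. The paper argues differently: from \ref{setting-SEMT} one has $\cO P_2\mid\Ind_{R^\sharp}V^*\otimes_{\cO P_1}A_1\otimes_{\cO P_1}\Ind_R V$ with $A_1=i_1\cO G_1i_1$; decomposing $A_1$ as a sum of $(\cO P_1,\cO P_1)$-bimodules $\cO P_1\otimes_{\cO Q_1}{}_{\varphi_1}\cO P_1$ singles out some $Q_1\le P_1$; a biadjoint-functor trick (\cite[Theorem 2.7.2]{Linck18a}) removes the twist $\varphi_1$; and then the $d_A$ argument gives $|P_2|\le|Q_1|$, whence $Q_1=P_1$ by symmetry. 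The paper's closing remark confirms that over $k$ these same manipulations yield only $P_1=Q_1R_1$, so characteristic~$0$ is essential here. The identical issue recurs in (2.2): restriction gives only $\cO R_2\mid V^*\otimes_{\cO R_1}\cO P_1\otimes_{\cO R_1}V$, and eliminating the middle $\cO P_1$ (equivalently, showing that $\cO R_2$ lands in the identity-double-coset term) again requires the $d_A$ argument rather than the bare Mackey decomposition you sketch.
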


The structure of this paper is as follows.

We first give some notation and preliminaries in \S\ref{sect:prelim}.
Puig proved that the fusion systems can be determined by source algebras in \cite{Puig86} (see also \cite[Theorem 8.7.1]{Linck18b}); a key ingredient in the proof of our main result is a corollary of this result (Theorem \ref{thm-quot-fusion-source-alg}).
Another ingredient is a Mackey-like formula of Bouc \cite{Bouc10}; see Theorem \ref{Bouc}.

Then we prove Theorem \ref{mainthm-1} in \S\ref{sect:proof-mainthm-1}.
The parts (1)--(4) follows from some known results and part (7) follows immediately from part (6).
So the bulk of the proof is for part (5) and (6).
In \S\ref{sect:sep-equiv}, we prove Theorem \ref{mainthm-2}.

Finally in \S\ref{exmp-rem}, we include an example in which there are indeed stable equivalences of Morita type with non-trivial $R_1$ and $R_2$ as in \ref{mainthm-1} over $k$, and give some related remarks.

%%%%%%%%%%%%%%%%%%%%%%%%%%%%%%%%%%%%%%%%%%%%%%%%%%%%%%%%%
\section{Notation and preliminaries}\label{sect:prelim}

In this section, let $\cO$ be an arbitrary commutative ring unless otherwise stated.

\begin{thmwot}
Let $A,B$ be $\cO$-algebras.
If $U,V$ are $A$-modules, then the notation $U \mid V$ means $U$ is isomorphic to a direct summand of $V$.
As convention, $(A,B)$-bimodules will be identified with $A\otimes_{\cO}B^{\op}$-modules, and in particular right (left) $B$-modules are viewed as left (right) $B^{\op}$-modules.
\end{thmwot}

\begin{thmwot}\label{SEMT}
Let $A,B$ be two $\cO$-algebras.
\begin{compactenum}[(1)]
\item
(\cite{Brou94}) If there are $(A,B)$-bimodule $M$ and $(B,A)$-bimodule $N$ which are projective as left and right modules such that
\[ M \otimes_B N \cong A \oplus U,\quad N \otimes_A M \cong A \oplus V \]
for some projective $A\otimes_{\cO}A^{\op}$-module $U$ and projective $B\otimes_{\cO}B^{\op}$-module $V$, then we say that $A,B$ are stably equivalent of Morita type.
\item
(\cite[Theorem 4.14.2]{Linck18a})
If $A,B$ are indecomposable as $\cO$-algebras, $A$ is not projective as an $A\otimes_{\cO}A^{\op}$-module, $B$ is not projective as a $B\otimes_{\cO}B^{\op}$-module and Krull-Schmidt holds for $\cO$-algebras, then $M,N$ in (1) can be chosen to be indecomposable.
\item
(\cite[Proposition 2.17.15]{Linck18a}) If $A,B$ are symmetric algebras which are stable equivalent of Morita tyhpe via $M$ and $N$, then $A,B$ are stably equivalent of Morita type via $M$ and $M^*$. \hfill $\lrcorner$
\end{compactenum}
\end{thmwot}

\begin{thmwot}
Separable equivalences was first defined by L. Kadison \cite{Kad95} and was used by M. Linckelmann in \cite{Linck11}.
P.A. Bergh and K. Erdmann introduced the related notion of separable division in \cite{BerErd11}.
Many basic properties of separable equivalences are included in \cite{Peac17};
in particular, stable equivalences of Morita type imply separable equivalences.

Let $A,B$ be two $\cO$-algebras.
\begin{compactenum}[(1)]
\item
(\cite{BerErd11}) If there are $(A,B)$-bimodule $M$ and $(B,A)$-bimodule $N$ which are projective as left and right modules such that
\[ M \otimes_B N \cong A \oplus U \]
for some $(A,A)$-bimodule $U$, then we say that $A$ separably divides $B$.
\item
(\cite{Linck11}) If $A$ separably divides $B$ and $B$ separably divides $A$, then we say that $A$ and $B$ are separably equivalent.
Separable equivalence is an equivalent relation between $\cO$-algebras.
\item
(\cite[Proposition 2.12.9]{Linck18a}) If $A,B$ are symmetric algebras which are separably equivalent via $M$ and $N$, then $A,B$ are separably equivalent via $M$ and $M^*$.
\end{compactenum}
Note that for separable equivalences, we can not argument as in \cite[Theorem 4.14.2]{Linck18a} to choose $M,N$ indecomposable even $A,B$ satisfies the condition in (2) of \ref{SEMT}. \hfill $\lrcorner$
\end{thmwot}

\begin{thmwot}\label{notation-GH}
Let $G,H$ be groups.
Then $\cO{G}\otimes_\cO(\cO{H})^{\op}$-bimodules can be identified with $\cO(G\times H)$-modules via the group isomorphism $H \to H^{\op},\ h \mapsto h^{-1}$.
If $G=H$, an $\cO(G \times G)$-module also has an $\cO G$-module structure via the diagonal homomorphism $G \to G \times G,\ g \mapsto (g,g)$.
Let $X$ be a subgroup of $G \times H$ and $V$ be an $\cO X$-module.
For any $(g,h)\in X$ and $v\in V$, we write the action of $(g,h)$ on $v$ as $(g,h).v=g.v.h^{-1}$.
As in \cite[2.1]{Lich25}, there is an isomorphism
\begin{equation}\label{equ-IndXGHV*}
\left(\Ind_X^{G\times H}V\right)^* \cong \Ind_{X^\sharp}^{H\times G}V^*,
\end{equation}
as $(\cO{H},\cO{G})$-bimodules, where $X^\sharp = \Set{ (h,g) \mid (g,h)\in X }$ and $(h,g)\in X^\sharp$ acts on $f\in V^*$ as
\[ [(h,g).f](v) = f(g^{-1}.v.h),\ \forall\, v\in V. \]
\hfill $\lrcorner$
\end{thmwot}

\begin{thmwot}\label{subgrp-direct-prod}
Let $G_1,G_2$ be groups and $X$ be a subgroup of $G_1 \times G_2$.

We denote by $\pi_1$ ($\pi_2$, resp.) the projection from $G_1 \times G_2$ to the first component $G_1$ (the second component $G_2$, resp.), and by abuse of notation, the restrictions of these two projections to $X$ are also denoted as $\pi_1,\pi_2$.
We will often identify $G_1$ ($G_2$, resp.) with $G_1 \times 1$ ($1\times G_2$, resp.) in $G_1 \times G_2$ when there is no confusion.
Set $X_1 = X \cap G_1$, and let $X_1$ also denote the subgroup of $G_1$ such that $X \cap G_1 = X_1 \times 1$ by abuse of notation; similarly for $X_2$.

By \cite[pp.140-141, (4.19)]{Suzuki82}, the subgroup $X$ gives the following data
\[ X_1 \unlhd \pi_1(X) \leq G_1,\quad X_2 \unlhd \pi_2(X) \leq G_2, \]
such that $X_1,X_2$ are both normal subgroup of $X$ and there exist isomorphisms
\[ X/X_1 \cong \pi_2(X),\quad X/X_2 \cong \pi_1(X) \]
and
\[ \theta\colon \pi_1(X)/X_1\cong X/(X_1X_2) \cong \pi_2(X)/X_2; \]
conversely, any system of data as above gives a subgroup of $G_1 \times G_2$.
In particular, we have
\[ X = \Set{ (g_1,g_2) \in \pi_1(X) \times \pi_2(X) \mid g_2X_2 = \theta(g_1X_1) } \]
and $X/[X_1X_2] \cong \Delta_\theta[\pi_1(X)/X_1]$.
(Here, for any group isomorphism $\theta\colon G \to H$, $\Delta_\theta G=\set{(g,\theta(g))\mid g\in G}$ and $\theta$ is abbreviated when $\theta$ is the identity.)
When $X_1=1=X_2$, we have $X=\Delta_\theta[\pi_1(X)]$.
\hfill $\lrcorner$
\end{thmwot}

We will need the following Mackey-like formula for tensor products of bimodules by S. Bouc.

\begin{thm}[Bouc \cite{Bouc10}]\label{Bouc}
Let $G,H,K$ be groups and $X$ ($Y$, resp.) be a subgroup of $G \times H$ ($H \times K$, resp.).
Assume $U$ is an $\cO X$-module and $V$ is an $\cO Y$-module.
Then there is an isomorphism of $(\cO{G},\cO{K})$-bimodules
\[ \left( \Ind_X^{G\times H} U \right) \otimes_{\cO H} \left( \Ind_Y^{H\times K} V \right) \cong \bigoplus_{t\in [\pi_2(X)\backslash{H}/\pi_1(Y)]} \Ind_{X\ast\lsup{(t,1)}Y}^{G\times K} U \otimes_{\cO(X_2\cap\lsup{t}Y_1)} \lsup{(t,1)}V, \]
where
\[ X\ast\lsup{(t,1)}Y = \Set{ (g,k)\in G\times K \mid \exists\, h\in H,\ (g,h)\in X,\, (h^t,k)\in Y } \]
and $(g,k)\in X\ast\lsup{(t,1)}Y$ acts on $u\otimes v \in U \otimes_{\cO(X_2\cap\lsup{t}Y_1)} \lsup{(t,1)}V$ as
\[ (g,k).(u\otimes v) = g.u.h^{-1} \otimes h^t.v.k^{-1} \]
if $h$ is chosen such that $(g,h)\in X$ and $(h^t,k)\in Y$. 
\end{thm}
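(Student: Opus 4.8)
The plan is to prove the formula by an explicit computation of both sides as $(\cO G,\cO K)$-bimodules; conceptually this is the classical composition (Mackey) formula for bisets, $\bigl((G\times H)/X\bigr)\times_H\bigl((H\times K)/Y\bigr)\cong\coprod_t(G\times K)/(X\ast\lsup{(t,1)}Y)$, decorated with the coefficient modules $U$ and $V$, but the direct route is self-contained and forces one to respect the left/right conventions of \ref{notation-GH}. First I would unwind the left-hand side. Writing $\Ind_X^{G\times H}U=\cO(G\times H)\otimes_{\cO X}U$ and $\Ind_Y^{H\times K}V=\cO(H\times K)\otimes_{\cO Y}V$, the product $\Ind_X^{G\times H}U\otimes_{\cO H}\Ind_Y^{H\times K}V$ is the $\cO$-module generated by symbols $e(g,b,k;u,v)$ with $g\in G$, $b\in H$, $k\in K$, $u\in U$, $v\in V$, and $\cO$-bilinear in $(u,v)$; the $\cO H$-balancing in the middle tensor factor can be used to normalise so that the $H$-component coming from the first factor is $1$, and one then checks that the only surviving relations are (R1) $e(gx_1,b,k;u,v)=e(g,x_2b,k;(x_1,x_2).u,v)$ for $(x_1,x_2)\in X$, coming from the $\cO X$-action on $U$, and the symmetric (R2) $e(g,by_1,ky_2;u,v)=e(g,b,k;u,(y_1,y_2).v)$ for $(y_1,y_2)\in Y$, coming from the $\cO Y$-action on $V$. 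The left $\cO G$- and right $\cO K$-actions are $g_0.e(g,b,k;u,v)=e(g_0g,b,k;u,v)$ and $e(g,b,k;u,v).k_0=e(g,b,k_0^{-1}k;u,v)$.

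Next, taking $x_1$ arbitrary in (R1) lets one replace $b$ by $x_2b$ at the cost of modifying $g$ and twisting $u$, and symmetrically (R2) allows right multiplication of $b$ by any element of $\pi_1(Y)$; hence the $H$-coordinate $b$ may be reduced to a chosen double-coset representative $t\in[\pi_2(X)\backslash H/\pi_1(Y)]$, and the left-hand side decomposes as a direct sum $\bigoplus_t M_t$, with $M_t$ spanned by the $e(g,t,k;u,v)$. For fixed $t$, the residual relations among the $e(g,t,k;u,v)$ are exactly those coming from pairs $(x_1,x_2)\in X$, $(y_1,y_2)\in Y$ with $x_2t=ty_1$; unwinding these, one finds that $M_t$ is generated over $\cO G\otimes_{\cO}\cO K^{\op}$ by the image of $e(1,t,1;u,v)$, that the relations among these images are precisely the defining relations of the relative tensor product $U\otimes_{\cO(X_2\cap\lsup{t}Y_1)}\lsup{(t,1)}V$ (the group $X_2\cap\lsup{t}Y_1$ being exactly the set of $x_2$, equivalently of $ty_1t^{-1}$, that arise when $g$ and $k$ are trivial), and that the stabiliser in $G\times K$ of this generator is precisely $X\ast\lsup{(t,1)}Y$ acting by $(g,k).(u\otimes v)=g.u.h^{-1}\otimes h^t.v.k^{-1}$, where $h$ is any element with $(g,h)\in X$ and $(h^t,k)\in Y$. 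Consequently $M_t\cong\Ind_{X\ast\lsup{(t,1)}Y}^{G\times K}\bigl(U\otimes_{\cO(X_2\cap\lsup{t}Y_1)}\lsup{(t,1)}V\bigr)$, and summing over $t$ gives the asserted isomorphism; formally one writes down the mutually inverse $\cO$-linear maps $e(g,t,k;u,v)\mapsto(g,k)\otimes(u\otimes v)$ and back, and checks that each relation above is respected and that they are bimodule homomorphisms.

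The main obstacle is not conceptual but the bookkeeping: one must (i) carry the left/right conventions of \ref{notation-GH} through consistently, so that the $\cO H$-balancing really does reduce to the normal form claimed and introduces no hidden relations; (ii) identify the residual stabiliser relations with the single subgroup $X\ast\lsup{(t,1)}Y\leq G\times K$, matching the normality and surjectivity data of \ref{subgrp-direct-prod} and getting the $k$ versus $k^{-1}$ and $h^t$ versus $\lsup{t}h$ signs right so as to recover the stated action formula exactly; and (iii) pin down that the relative tensor product is taken over $\cO(X_2\cap\lsup{t}Y_1)$ and no larger or smaller group, which requires using the correct embeddings of $X_2\cap\lsup{t}Y_1$ into $X$ (as $\{1\}\times(X_2\cap\lsup{t}Y_1)\leq X\cap(1\times H)$) and into $\lsup{(t,1)}Y$ (inside $\lsup{(t,1)}Y\cap(H\times 1)$), and verifying that the resulting $X\ast\lsup{(t,1)}Y$-module structure on $U\otimes_{\cO(X_2\cap\lsup{t}Y_1)}\lsup{(t,1)}V$ is independent of the auxiliary choice of $h$.
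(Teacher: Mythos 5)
The paper does not prove Theorem~\ref{Bouc}; it is stated as a citation to Bouc~\cite{Bouc10} and used as a black box, so there is no internal argument to compare against. Your proof is a self-contained direct verification, and it is essentially the right argument: one uses the $\cO H$-balancing in the middle to reduce the $H$-coordinate to a double-coset representative $t\in\pi_2(X)\backslash H/\pi_1(Y)$; one observes that the relations (R1), (R2) preserve the double coset of the middle coordinate, so the left-hand side decomposes as $\bigoplus_t M_t$ of $(\cO G,\cO K)$-sub-bimodules; and one then identifies $M_t$ with $\Ind_{X\ast\lsup{(t,1)}Y}^{G\times K}\bigl(U\otimes_{\cO(X_2\cap\lsup{t}Y_1)}\lsup{(t,1)}V\bigr)$ by tracking the residual relations. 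Your relations (R1) $e(gx_1,b,k;u,v)=e(g,x_2b,k;(x_1,x_2).u,v)$ and (R2) $e(g,by_1,ky_2;u,v)=e(g,b,k;u,(y_1,y_2).v)$ are correct under the conventions of~\ref{notation-GH}, and combining them with $b=t$ held fixed forces $y_1=t^{-1}x_2t=x_2^t$, which reproduces both the subgroup $X\ast\lsup{(t,1)}Y$ and the formula $(g,k).(u\otimes v)=g.u.h^{-1}\otimes h^t.v.k^{-1}$, while specialising to $x_1=1=y_2$ recovers the balancing over $X_2\cap\lsup{t}Y_1$. This is precisely Bouc's biset-composition (Mackey) formula for $\bigl((G\times H)/X\bigr)\times_H\bigl((H\times K)/Y\bigr)\cong\coprod_t(G\times K)/(X\ast\lsup{(t,1)}Y)$ carried along with the coefficient modules, as you note. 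In the original reference the argument is phrased more structurally in the language of tensor products of bimodules over categories rather than by generators and relations, which makes the well-definedness and independence-of-choices points you flag in (i)--(iii) automatic; your explicit approach trades that conceptual economy for a more elementary but bookkeeping-heavy verification, and those three points are exactly the places where care is needed. I see no gap, only unwritten routine checks.
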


For convenience, we list in the following lemma some easy observations, which are in fact special cases of the so-called non-injective inductions and restrictions addressed in \cite{LiTian24}.

\begin{lem}\label{lem-res-ind-quotients}
Let $G$ be a group, $N \unlhd G$, $H\leq G$ such that $H$ contains $N$.
Assume $V$ is an $\cO{G}$-module and $U$ is an $\cO{H}$-module.
\begin{compactenum}[(1)]
\item
There is an isomorphism of $\cO(G/N)$-modules
\[ \cO(G/N) \otimes_{\cO{G}} V \cong \cO \otimes_{\cO{N}} V, \]
where on the right side $\bar{g} \in G/N$ acts on $1\otimes v \in \cO \otimes_{\cO{N}} V$ for $v \in V$ as $\bar{g}.(1\otimes v) = 1\otimes g.v$.
\item
$\Res^{G/N}_{H/N} \left[ \cO(G/N) \otimes_{\cO{G}} V \right] \cong \cO(H/N) \otimes_{\cO{H}} \Res^G_HV$.
\item
$\Ind_{H/N}^{G/N} [\cO(H/N)\otimes_{\cO{H}}U] \cong \cO(G/N) \otimes_{\cO{G}} \Ind_H^GU$.
\end{compactenum}
\end{lem}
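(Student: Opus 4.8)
The three statements are all about comparing "restriction/induction along group homomorphisms" with the naive tensor-with-group-algebra operation, so the plan is to prove (1) directly and then derive (2) and (3) from (1) together with the ordinary adjunctions and transitivity of induction. For part (1), I would set up an explicit $\cO$-linear map. Note that $\cO(G/N) \otimes_{\cO G} V$ has $\cO(G/N)$-basis elements of the form $\bar g \otimes v$, and the relations coming from $\otimes_{\cO G}$ say precisely that $\bar g h \otimes v = \bar g \otimes h.v$ for $h \in G$; since $\bar g h = \bar g$ exactly when $h \in N$, the module $\cO(G/N)\otimes_{\cO G}V$ is spanned by $1\otimes v$ subject to the relation $1\otimes n.v = 1\otimes v$ for $n\in N$, which is exactly the defining relation of $\cO\otimes_{\cO N}V$. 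So the map $\bar g\otimes v \mapsto \bar g.(1\otimes v)$ (with the prescribed $G/N$-action on the right-hand side) is well defined, $\cO(G/N)$-linear, and has the obvious inverse; I would just check these routine points. Alternatively one can phrase this as: $\cO(G/N) \cong \cO\otimes_{\cO N}\cO G$ as $(\cO(G/N),\cO G)$-bimodules and then use associativity of the tensor product, $\cO(G/N)\otimes_{\cO G}V \cong (\cO\otimes_{\cO N}\cO G)\otimes_{\cO G}V \cong \cO\otimes_{\cO N}V$.

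For part (2), the statement is just part (1) applied twice plus compatibility of the identifications with restriction. On the left-hand side, restricting $\cO(G/N)\otimes_{\cO G}V$ from $G/N$ to $H/N$ and then applying part (1) for the pair $N\unlhd H$ to the right-hand side of (1), I get $\cO\otimes_{\cO N}\Res^G_H V$ on both sides; one only needs to observe that the isomorphism of part (1) is natural enough that it intertwines $\Res^{G/N}_{H/N}$ on the $\cO(G/N)$-side with $\Res^G_H$ on the $\cO N$-side, which is immediate from the explicit formulas since the $H/N$-action on $1\otimes v$ is by $\bar h.(1\otimes v) = 1\otimes h.v$, matching the action defining the right-hand side of (1) for $H$. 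For part (3), I would use the adjunction: for any $\cO(G/N)$-module $W$,
\[
\Hom_{\cO(G/N)}\!\bigl(\Ind_{H/N}^{G/N}[\cO(H/N)\otimes_{\cO H}U],\, W\bigr) \cong \Hom_{\cO(H/N)}\!\bigl(\cO(H/N)\otimes_{\cO H}U,\, \Res^{G/N}_{H/N}W\bigr),
\]
then apply part (2) to rewrite $\Res^{G/N}_{H/N}W \cong \cO(H/N)\otimes_{\cO H}\Res^G_H W'$ when $W = \cO(G/N)\otimes_{\cO G}W'$ — but it is cleaner to avoid Yoneda and instead argue directly with the bimodule description: $\Ind_{H/N}^{G/N}X = \cO(G/N)\otimes_{\cO(H/N)}X$, and using $\cO(H/N)\otimes_{\cO H}U \cong \cO\otimes_{\cO N}U$ from (1) plus $\cO(G/N)\otimes_{\cO(H/N)}(\cO\otimes_{\cO N}(-)) \cong \cO\otimes_{\cO N}(\cO G\otimes_{\cO H}(-))$ — the latter an associativity manipulation using $\cO(G/N)\otimes_{\cO(H/N)}\cO(H/N) \cong \cO(G/N) \cong \cO\otimes_{\cO N}\cO G$ as $(\cO(G/N),\cO G)$-bimodules — one lands on $\cO\otimes_{\cO N}\Ind_H^G U \cong \cO(G/N)\otimes_{\cO G}\Ind_H^G U$ by (1) again.

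The only genuinely non-routine point is bookkeeping the bimodule identifications so that the associativity rearrangements are legitimate; in particular one should verify once and for all the $(\cO(G/N),\cO G)$-bimodule isomorphism $\cO(G/N)\cong\cO\otimes_{\cO N}\cO G$, after which everything is formal. So I expect the main (minor) obstacle to be stating part (1) with enough naturality that parts (2) and (3) follow without redoing the explicit computation each time; once the explicit iso in (1) and its module-action conventions are pinned down precisely, (2) and (3) are short. Since, as the authors note, these are special cases of the non-injective induction/restriction of \cite{LiTian24}, I would keep the proof brief and mostly cite the explicit formulas.
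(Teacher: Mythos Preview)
Your proposal is correct and follows essentially the same approach as the paper: cite \cite{LiTian24} for (1), derive (2) from (1), and obtain (3) by associativity of tensor products and bimodule identifications. The only difference is that the paper's chain for (3) is a touch more direct---it never passes through the $\cO\otimes_{\cO N}(-)$ description at all, but simply collapses $\cO(G/N)\otimes_{\cO(H/N)}\cO(H/N)\otimes_{\cO H}U$ to $\cO(G/N)\otimes_{\cO H}U$ and then reinserts $\cO G\otimes_{\cO H}$---whereas you detour through part (1) on both ends; both routes are short and amount to the same associativity manipulation.
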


\begin{proof}
These are special cases of \cite[2.2(2), 3.6, 3.1]{LiTian24}.
But (2) also follows directly from (1) and (3) follows from the association of tensor products:
\[ \Ind_{H/N}^{G/N} [\cO(H/N)\otimes_{\cO{H}}U] = \cO(G/N) \otimes_{\cO(H/N)} \cO(H/N) \otimes_{\cO{H}} U \]
\[ \cong \cO(G/N) \otimes_{\cO{H}} U = \cO(G/N) \otimes_{\cO{G}} \cO{G} \otimes_{\cO{H}} U \cong \cO(G/N) \otimes_{\cO{G}} \Ind_H^GU. \qedhere \]
\end{proof}

The following statement is used in literature (\emph{e.g.} in the proof of \cite[Lemma 9.11.1]{Linck18b}); for convenience, we include a proof of it here.

\begin{lem}\label{lem-inn-P}
Let $\cO$ be as in Convention \ref{conv-O}, $P$ be a $p$-group and $\varphi\in\Aut(P)$.
If $\lsub{\varphi}\cO{P} \cong \cO{P}$ as $(\cO{P},\cO{P})$-bimodules, then $\varphi\in\Inn(P)$.
\end{lem}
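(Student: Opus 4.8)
The plan is to translate the hypothesis into an isomorphism of permutation modules over $P\times P$ and then to detect conjugacy of the relevant point stabilizers by means of the Brauer construction. Identifying $(\cO P,\cO P)$-bimodules with $\cO(P\times P)$-modules as in \ref{notation-GH}, the regular bimodule $\cO P$ is isomorphic to $\Ind_{\Delta P}^{P\times P}\cO$ and $\lsub{\varphi}\cO P$ to $\Ind_{\Delta_\varphi P}^{P\times P}\cO$, where $\Delta_\varphi P=\set{(a,\varphi(a))\mid a\in P}$; indeed, in both cases $P\times P$ acts transitively on the natural $\cO$-basis $P$, by $(a,b)\cdot x=axb^{-1}$ in the first case and by $(a,b)\cdot x=\varphi(a)xb^{-1}$ in the second, and the stabilizers of $1\in P$ are $\Delta P$ and $\Delta_\varphi P$ respectively. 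Thus the hypothesis becomes $\Ind_{\Delta_\varphi P}^{P\times P}\cO\cong\Ind_{\Delta P}^{P\times P}\cO$ as $\cO(P\times P)$-modules, and applying $k\otimes_\cO-$ yields the analogous isomorphism of permutation $k(P\times P)$-modules.

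Next I would evaluate the Brauer construction of both sides at the $p$-subgroup $\Delta P$ of $P\times P$. For a permutation module $k\Omega$ and a $p$-subgroup $Q$ one has $(k\Omega)(Q)\cong k[\Omega^Q]$ as $k$-spaces, because $(k\Omega)^Q$ decomposes as $k[\Omega^Q]$ plus the span of the sums over the non-trivial $Q$-orbits, and the latter is exactly $\sum_{R<Q}\operatorname{tr}_R^Q((k\Omega)^R)$, noting that $\operatorname{tr}_R^Q(\omega)=[Q:R]\,\omega=0$ in characteristic $p$ for $\omega\in\Omega^Q$ and $R<Q$. Hence the isomorphism of the two permutation modules forces $\bigl|((P\times P)/\Delta_\varphi P)^{\Delta P}\bigr|=\bigl|((P\times P)/\Delta P)^{\Delta P}\bigr|$. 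A direct computation in the two models identifies the fixed-point set on the right with $Z(P)$, and the one on the left with $\set{x\in P\mid \varphi(c)=xcx^{-1}\text{ for all }c\in P}$, i.e. with the set of $x\in P$ for which $\varphi$ equals conjugation by $x$. Since $|Z(P)|\ge1$, the latter set is non-empty, and therefore $\varphi\in\Inn(P)$.

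The sole non-formal ingredient is the fixed-point description of the Brauer quotient of a permutation module; everything else is bookkeeping. I expect this to be the crux precisely because isomorphism of the two permutation modules does not force conjugacy of the point stabilizers at the level of ordinary character theory — equality of the ordinary permutation characters of $(P\times P)/\Delta_\varphi P$ and $(P\times P)/\Delta P$ only gives that $\varphi$ stabilizes every conjugacy class of $P$, and a $p$-group may admit non-inner class-preserving automorphisms — so one genuinely needs the $p$-local information carried by the Brauer construction at the diagonal. (An equivalent route is to observe that a bimodule isomorphism $\lsub{\varphi}\cO P\to\cO P$ sends $1$ to a unit $u\in(\cO P)^\times$ with $u^{-1}xu=\varphi^{-1}(x)$ for all $x\in P$, reducing the statement to the assertion that the conjugation action of $N_{(kP)^\times}(P)$ on $P$ has image $\Inn(P)$; this is proved by the same computation after reduction modulo $J(\cO)$.)
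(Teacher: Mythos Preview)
Your argument is correct. Both your route and the paper's ultimately locate a fixed point of the $\Delta P$-action $x\cdot z=\varphi(x)zx^{-1}$ on the basis $P$, but they do so by different mechanisms. You compare Brauer quotients of the two permutation $k(P\times P)$-modules at $\Delta P$: since $(k\Omega)(\Delta P)\cong k[\Omega^{\Delta P}]$, the isomorphism forces the twisted side to have $|Z(P)|\ge 1$ fixed points, hence some $x$ with $\varphi=c_x$. The paper instead passes through the unit $u\in(\cO P)^\times$ implementing the bimodule isomorphism (this is essentially your parenthetical alternative), observes that the coefficients of $u$ are constant on orbits of the above action, and then uses that the augmentation of $u$ is a unit in $\cO$ to rule out the possibility that all orbits are non-trivial (their orbit sums would have augmentation in $p\cO$). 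Your approach is more categorical and makes the comparison with the untwisted module explicit; the paper's is more elementary, needing only the augmentation map rather than the Brauer construction, and extracts the conjugating element directly from the support of $u$. Your side remark that equality of ordinary permutation characters would only give a class-preserving automorphism is a nice observation explaining why the $p$-local input is essential.
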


\begin{proof}
It follows from \cite[Proposition 2.8.16(i)]{Linck18a} that $\varphi$ induces an inner automorphism of $\cO{P}$ as an $\cO$-algebra.
Thus there is $u \in (\cO{P})^\times$ such that $\varphi(x)=uxu^{-1}$ for any $x\in P$.
Assume $u=\sum_{z\in P}a_zz$ with $a_z\in\cO$.
Then from $u=\varphi(x)^{-1}ux,\, \forall\,x\in P$, we have that
\[ a_{\varphi(x)zx^{-1}} = a_z,\ \forall\, x,z\in P. \tag{$\ast$} \]
There is an action of $P$ on $P$ defined by
\[ x.z=\varphi(x)zx^{-1},\ \forall\, x,z\in P. \]
and any orbit of the above action is of length a power of $p$.
It follows from ($\ast$) that $u$ is an $\cO$-linear combination of orbit sums of the above action.
Since the image of $u$ under the augmentation map $\cO{P} \to \cO{P}/I(\cO{P}) \cong \cO$ is invertible in $\cO$, the above action of $P$ must have a fixed point $y$, which means that
\[ y=x.y=\varphi(x)yx^{-1},\ \forall\, x\in P. \]
Thus $\varphi(x) = yxy^{-1}$.
\end{proof}

\begin{thmwot}\label{quotient-fusion}
For expositions of general theory of fusion system, see \emph{e.g.} \cite[Part I]{AschKesOliv11}, \cite{Craven11} or \cite[\S\S8.1--8.3]{Linck18b}; note that fusion systems in \cite{Linck18b} mean saturated fusion systems as in \cite{AschKesOliv11} and \cite{Craven11}.
Here we just recall some basics of quotient fusion systems following \cite{Craven11}; for the definitions of normal subgroups, strongly closed subgroups and weakly closed subgroups of a fusion system, see \emph{e.g.} \cite[Part I, Definition 4.1]{AschKesOliv11} and see also \cite[Definition 4.55]{Craven11} for an equivalent definition of strongly closed subgroups.

Let $\cF$ be a (not necessarily saturated) fusion system on a $p$-group $P$ and $R\leq P$.

Assume $R$ is strongly $\cF$-closed.
As in \cite[Definition 5.5]{Craven11}, denote by $\bar{\cF}_R$ the image of the natural morphism of fusion systems $\cF \to \cU(P/R)$ with $\cU(P/R)$ the universal fusion system over $P/R$ (see \cite[Definition 5.1]{Craven11} and \cite[Definition 4.2]{Craven11} for the definition of morphisms of fusion systems and the definition of universal fusion systems resp.) and denote by $\grp{\bar{\cF}_R}$ the subcategory of $\cU(P/R)$ generated by $\bar{\cF}_R$.
Then by \cite[Lemma 5.6]{Craven11}, $\grp{\bar{\cF}_R}$ is a fusion system on $P/R$.

Assume $R \unlhd P$ (not normal in $\cF$!).
The quotient system $\cF/R$ is defined as in \cite[Definition 5.9]{Craven11} and $\cF/R$ is a fusion system over $P/R$ by \cite[Proposition 5.10]{Craven11}.

Now we list some results concerning the systems $\grp{\bar{\cF}_R}$ and $\cF/R$ as follows.
\begin{compactenum}[(\ref{quotient-fusion}.1)]
\item\label{quot-fs-item-1}
(\cite[\S5.2]{Craven11})
If $R$ is strongly $\cF$-closed, we have
\[ \cF/R \subseteq \bar{\cF}_R \subseteq \grp{\bar{\cF}_R}, \]
and $\cF/R = \bar{\cF}_R$ implies $\bar{\cF}_R = \grp{\bar{\cF}_R}$, in which case it follows from the definitions of $\cF/R$ and $\bar{\cF}_R$ that:
\emph{for any subgroups $S,T$ of $P$ containing $R$ and any $\psi \in \Hom_\cF(\tilde{S},\tilde{T})$ with $\tilde{S},\tilde{T} \leq P$ such that $R\tilde{S}=S$, $R\tilde{T}=T$, the induced map $\bar{\psi}\colon S/R \to T/R$ in $\cF/R = \bar{\cF}_R = \grp{\bar{\cF}_R}$ is in fact induced by some $\varphi \in \Hom_\cF(S,T)$ stabilizing $R$.}
\item\label{quot-fs-item-2}
(\cite[Proposition 5.13]{Craven11})
If $R \unlhd \cF$ (i.e. $\cF=N_{\cF}(R)$), then $\cF/R = \bar{\cF}_R = \grp{\bar{\cF}_R}$.
\item\label{quot-fs-item-3}
(\cite[Proposition 5.11]{Craven11} or \cite[Theorem 6.2]{Linck07})
If $\cF$ is saturated and $R$ is weakly $\cF$-closed, then $\cF/R$ is saturated.
\item\label{quot-fs-item-4}
(\cite[Theorem 5.14]{Craven11} or \cite{Puig06})
If $\cF$ is saturated and $R$ is strongly $\cF$-closed, then $\cF/R = \bar{\cF}_R = \grp{\bar{\cF}_R}$ is saturated.
\hfill $\lrcorner$
\end{compactenum}
\end{thmwot}

\begin{thmwot}\label{FS-source-alg}
Let $\cO$ be as in Convention \ref{conv-O}, $G$ be a group and $b$ be a block of $G$ with a maximal Brauer pair $(P,e)$, then there is a fusion system $\cF_{(P,e)}(G,b)$ on $P$, called the fusion system of $b$ determined by $(P,e)$; see for example \cite[Definition 8.5.1]{Linck18b}.
With the assumption that $k$ is large enough, $\cF_{(P,e)}(G,b)$ is saturated; see for example \cite[Theorem 8.5.2]{Linck18b}.
Since different maximal Brauer pairs associated with the block $b$ are $G$-conjugate, the fusion systems given by maximal Brauer pairs are isomorphic.
A particular maximal Brauer pair and the fusion system associated to this maximal Brauer pair can be determined by a source idempotent $i\in(\cO{G}b)^P$; see for example \cite[\S8.7]{Linck18b}.
Furthermore, the fusion system determined by the source idempotent $i$ is determined by the $(\cO{P},\cO{P})$-bimodule structure of the source algebra $i\cO{G}i$ by the local fusion of Puig \cite{Puig86}; see also \cite[Theorem 8.7.1]{Linck18b}.
We include here a corollary of Puig's local fusion (we use the statements in \cite[Theorem 8.7.1]{Linck18b}) to quotients of fusion systems given by blocks.
Note that \emph{the assertion in \cite[Theorem 8.7.1(ii)]{Linck18b} still holds when the condition ``$R$ is fully $\cF$-centralised'' is replaced by ``$Q$ is fully $\cF$-centralised''} with some obvious modification in the proof of \cite[Theorem 8.7.1(ii)]{Linck18b}
\footnote{Keep the notation in \cite[Theorem 8.7.1]{Linck18b} and assume $\varphi\colon Q \to R$ is an isomorphism in $\cF$, then it can be proved that $\lsub{\varphi}\cO{R} \mid A$ as $(\cO{Q},\cO{R})$-bimodules when $Q$ is fully $\cF$-centralized.
In fact, by assumption, there is $x\in G$ such that $\varphi(u)=\lsup{x}u$ for any $u\in Q$ and $\lsup{x}e_Q=e_R$.
Let $\nu$ be a local point of $R$ on $\cO{G}b$ such that $A\cap\nu\neq0$ and set $\mu=\lsup{x^{-1}}\nu$.
Since $A\cap\nu\neq0$, we have $\Br_R^A(\nu)e_R\neq0$ and conjugating by $x^{-1}$ implies that $\Br_Q^A(\mu)e_Q\neq0$.
Since $Q$ is fully $\cF$-centralized, it follows from \cite[Proposition 8.7.3(ii)]{Linck18b} that $A\cap\mu\neq0$.
Then the rest of the proof is exactly the same as for \cite[Theorem 8.7.1(ii)]{Linck18b}.}.
\end{thmwot}

\begin{thm}\label{thm-quot-fusion-source-alg}
Let $\cO$ be as in Convention \ref{conv-O}, $G$ be a group and $b$ be a block of $G$ with a defect group $P$ and a source idempotent $i\in(\cO{G}b)^P$.
Denote by $\cF$ the fusion system of the block $\cO{G}b$ on $P$ determined by $i$.
Assume $R\unlhd P$.
Set $\bar{P}=P/R$ and use bar convention for $\bar{P}$.
\begin{compactenum}[(1)]
\item
Assume $R$ is strongly $\cF$-closed.
Then for any two subgroups $S,T$ of $P$ containing $R$, any indecomposable direct summand of $\cO\bar{P} \otimes_{\cO{P}} i\cO{G}i \otimes_{\cO{P}} \cO\bar{P}$ as an $(\cO\bar{S},\cO\bar{T})$-bimodule is isomorphic to $\cO\bar{S} \otimes_{\cO\bar{Q}} \lsub{\bar{\varphi}}\cO\bar{T}$, where $R \leq Q \leq S$ and $\bar{\varphi}\colon \bar{Q} \to \bar{T}$ in $\cF/R$.
\item
If $S,T$ are subgroups of $P$ containing $R$ such that $S$ or $T$ is fully $\cF$-centralized and $\bar{\varphi}\colon \bar{S} \to \bar{T}$ is an $\cF/R$-isomorphism, then $\lsub{\bar{\varphi}}\cO\bar{T}$ is isomorphic to an indecomposable direct summand of $\cO\bar{P} \otimes_{\cO{P}} i\cO{G}i \otimes_{\cO{P}} \cO\bar{P}$ as an $(\cO\bar{S},\cO\bar{T})$-bimodule.
\item
Assume $R$ is strongly $\cF$-closed.
If $T$ is fully $\cF$-centralized containing $R$ and $\varphi\in\Aut(T)$, then $\bar{\varphi} \in \cF/R$ if and only if $\lsub{\bar{\varphi}}\cO\bar{T}$ is an indecomposable direct summand of $\cO\bar{P} \otimes_{\cO{P}} i\cO{G}i \otimes_{\cO{P}} \cO\bar{P}$ as an $(\cO\bar{T},\cO\bar{T})$-bimodule.
\item
If $R$ is weakly $\cF$-closed, then the fusion system $\cF/R$ is generated by $\Aut_{\cF/R}(\bar{T})$ with $T$ running over all fully $\cF$-centralized subgroups of $P$ containing $R$.
\item
If $R$ is strongly $\cF$-closed, then $\cF/R$ is determined by the $(\cO\bar{P},\cO\bar{P})$-bimodule structure of $\cO\bar{P} \otimes_{\cO{P}} i\cO{G}i \otimes_{\cO{P}} \cO\bar{P}$.
\end{compactenum}
\end{thm}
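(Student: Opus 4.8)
The plan is to transport the description of the source algebra $A=i\cO{G}i$ as an $(\cO{P},\cO{P})$-bimodule coming from Puig's local fusion (\cite[Theorem 8.7.1]{Linck18b}, together with the modification recorded in \ref{FS-source-alg}) through the functor $\cO\bar{P}\otimes_{\cO{P}}(-)\otimes_{\cO{P}}\cO\bar{P}$, using Lemma \ref{lem-res-ind-quotients} to commute this functor past inductions, restrictions and inflations. Several easy observations will recur. First, for $R\leq S\leq P$ and $R\leq T\leq P$ the restriction of $\cO\bar{P}\otimes_{\cO{P}}A\otimes_{\cO{P}}\cO\bar{P}$ to an $(\cO\bar{S},\cO\bar{T})$-bimodule is isomorphic to $\cO\bar{S}\otimes_{\cO{S}}A\otimes_{\cO{T}}\cO\bar{T}$ (with $A$ restricted to an $(\cO{S},\cO{T})$-bimodule on the right), by Lemma \ref{lem-res-ind-quotients}(1)--(2) applied on each side, $R\times R\unlhd P\times P$. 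Second, if $\theta\colon G\twoheadrightarrow\bar{G}$ is surjective and $X,Y$ are a right and a left $\cO\bar{G}$-module, then $X\otimes_{\cO{G}}Y\cong X\otimes_{\cO\bar{G}}Y$ (both equal $X\otimes_{\cO}Y$ modulo the same relations); by Lemma \ref{lem-res-ind-quotients}(1) and associativity this lets one replace a tensor over $\cO{U}$ by a tensor over a quotient of $U$. Third, if $R$ is strongly $\cF$-closed and $\psi\in\Hom_\cF(U,V)$, then $\psi(U\cap R)=\psi(U)\cap R$ (strong closedness applied to $\psi|_{U\cap R}$ and to $\psi^{-1}|_{\psi(U)\cap R}$), so $\psi$ induces an injection $\bar{\psi}\colon\overline{UR}\to\overline{\psi(U)R}$ with kernel $U\cap R$, which is the image of $\psi$ under the natural morphism $\cF\to\cU(\bar{P})$, hence a morphism of $\bar{\cF}_R$. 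Finally, for a $p$-group $\bar{S}\times\bar{T}$ and a subgroup $D$ the permutation module $\cO[(\bar{S}\times\bar{T})/D]$ has a one-dimensional head and is therefore indecomposable.

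For (1) and (2) I would decompose $A$ into indecomposable $(\cO{S},\cO{T})$-bimodules; by \cite[Theorem 8.7.1(i)]{Linck18b} each is $\cO{S}\otimes_{\cO{U}}\lsub{\varphi}\cO{T}$ for some $U\leq S$ and some $\varphi\in\Hom_\cF(U,T)$. Applying the functor and the first observation turns the corresponding summand into $\cO\bar{S}\otimes_{\cO{U}}\lsub{\varphi}\cO\bar{T}$. For (1), strong closedness makes the left $\cO{U}$-action factor through $\bar{U}\cong\bar{Q}$, $Q=UR$, so by the second observation this is $\cO\bar{S}\otimes_{\cO\bar{Q}}\lsub{\bar{\varphi}}\cO\bar{T}\cong\cO[(\bar{S}\times\bar{T})/\Delta_{\bar{\varphi}}(\bar{Q})]$, indecomposable, with $\bar{\varphi}\colon\bar{Q}\to\bar{T}$ in $\bar{\cF}_R$, hence in $\cF/R$ because $\cF$ is saturated (\ref{FS-source-alg}) and $R$ strongly $\cF$-closed force $\cF/R=\bar{\cF}_R=\grp{\bar{\cF}_R}$ by \ref{quot-fs-item-4}; Krull--Schmidt then gives (1). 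For (2), since $R\leq S$, $R\leq T$ and $\bar{\varphi}$ is an isomorphism one gets $|S|=|T|$, so $\bar{\varphi}$ lifts to an $\cF$-isomorphism $\varphi\colon S\to T$ with $\varphi(R)=R$; as $S$ or $T$ is fully $\cF$-centralized, \cite[Theorem 8.7.1(ii)]{Linck18b} with the modification of \ref{FS-source-alg} gives $\lsub{\varphi}\cO{T}\mid A$ as $(\cO{S},\cO{T})$-bimodules, and applying the functor and the second observation yields the indecomposable summand $\lsub{\bar{\varphi}}\cO\bar{T}\mid\cO\bar{P}\otimes_{\cO{P}}A\otimes_{\cO{P}}\cO\bar{P}$.

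For (3): if $\bar{\varphi}\in\cF/R$ it is an $\cF/R$-automorphism of $\bar{T}$, so (2) applies with $S=T$; conversely, if $\lsub{\bar{\varphi}}\cO\bar{T}$ is an indecomposable summand, (1) writes it as $\cO\bar{T}\otimes_{\cO\bar{Q}}\lsub{\bar{\psi}}\cO\bar{T}$ with $R\leq Q\leq T$, comparing $\cO$-ranks forces $Q=T$, so $\lsub{\bar{\varphi}}\cO\bar{T}\cong\lsub{\bar{\psi}}\cO\bar{T}$ with $\bar{\psi}\in\Aut_{\cF/R}(\bar{T})$, and tensoring with $\lsub{\bar{\psi}^{-1}}\cO\bar{T}$ and applying Lemma \ref{lem-inn-P} to $\bar{T}$ gives $\bar{\psi}^{-1}\bar{\varphi}\in\Inn(\bar{T})$, whence $\bar{\varphi}\in\cF/R$. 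For (4): weak closedness forces $R\unlhd P$ and makes every $\cF$-morphism between subgroups containing $R$ stabilize $R$, so for $R\leq Q\leq P$ the $\cF/R$-conjugacy class of $\bar{Q}$ corresponds bijectively (via taking preimages) to the $\cF$-conjugacy class of $Q$; since $N_{\bar{P}}(\bar{Q})=N_P(Q)/R$, $\bar{Q}$ is fully $(\cF/R)$-normalized iff $Q$ is fully $\cF$-normalized, hence fully $\cF$-centralized. As $\cF/R$ is saturated (\ref{quot-fs-item-3}), Alperin's fusion theorem presents $\cF/R$ as generated by the $\Aut_{\cF/R}(\bar{Q})$ over fully $(\cF/R)$-normalized $\bar{Q}$, which lie among the $\bar{T}$ with $T$ fully $\cF$-centralized containing $R$, giving (4). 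Finally (5): let $\cG$ be the fusion system on $\bar{P}$ generated by all $\bar{\varphi}\in\Aut(\bar{T})$, $R\leq T\leq P$, such that $\lsub{\bar{\varphi}}\cO\bar{T}$ is an indecomposable summand of $\cO\bar{P}\otimes_{\cO{P}}A\otimes_{\cO{P}}\cO\bar{P}$; by (1) and Lemma \ref{lem-inn-P} (as in (3)) every such $\bar{\varphi}$ lies in $\cF/R$, so $\cG\subseteq\cF/R$, while (4) and (3) show $\cF/R$ is generated by groups $\Aut_{\cF/R}(\bar{T})\subseteq\cG$, so $\cF/R=\cG$, which depends only on the bimodule in question.

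The step I expect to cause the most trouble is not a single computation but keeping the bimodule side and the fusion side aligned: one must be sure that the injections induced by $\cF$-morphisms are morphisms of $\cF/R$ itself and not merely of the a priori larger $\bar{\cF}_R$ or $\grp{\bar{\cF}_R}$, which is exactly where strong closedness of $R$ and the identification $\cF/R=\bar{\cF}_R=\grp{\bar{\cF}_R}$ enter, and the analogous point in (4), where only weak closedness is available and is handled through the normalizer identity $N_{\bar{P}}(\bar{Q})=N_P(Q)/R$ so that Alperin's fusion theorem for $\cF/R$ is applied with automorphism groups of the right subgroups.
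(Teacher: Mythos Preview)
Your proof is correct and, for parts (1)--(3) and (5), proceeds essentially as the paper does (the paper invokes Green's indecomposability theorem in (1) where you use the one-dimensional head of a transitive $p$-permutation module, but this is cosmetic).

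The one genuine difference is in (4). The paper applies Alperin's fusion theorem to $\cF$ itself and then observes that, since $R$ is weakly $\cF$-closed, each of the generating automorphisms $\Aut_\cF(T)$ (with $T$ fully $\cF$-centralized) has $R\leq T$ and stabilizes $R$, so descends to $\Aut_{\cF/R}(\bar T)$; this gives (4) in one line ``from the definition of $\cF/R$''. You instead apply Alperin's theorem to $\cF/R$ (using its saturation from (\ref{quotient-fusion}.\ref{quot-fs-item-3})) and then pull the fully $(\cF/R)$-normalized condition back to $\cF$ via the bijection of conjugacy classes and the identity $N_{\bar P}(\bar Q)=N_P(Q)/R$. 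Both arguments are valid; the paper's is shorter and avoids the correspondence between fully normalized subgroups, while yours makes transparent that the conclusion of (4) really only uses the saturated structure of $\cF/R$ together with the translation of normalizers.
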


\begin{proof}
Note that it follows from Lemma \ref{lem-res-ind-quotients}\,(2) that
\[ \tag{$\ast$} \Res^{\bar{P}\times\bar{P}}_{\bar{S}\times\bar{T}} \left[ \cO\bar{P} \otimes_{\cO{P}} i\cO{G}i \otimes_{\cO{P}} \cO\bar{P} \right] \cong \cO\bar{S} \otimes_{\cO{S}} i\cO{G}i \otimes_{\cO{T}} \cO\bar{T}. \]

(1) By \cite[Theorem 8.7.1(i)]{Linck18b}, any indecomposable direct summand of $i\cO{G}i$ as $(\cO{S},\cO{T})$-bimodule is isomorphic to $\cO{S} \otimes_{\cO{Q_0}} \lsub{\varphi}\cO{T}$ for some $Q_0 \leq S$ and some $\varphi\colon Q_0 \to T$ in $\cF$.
By ($\ast$), the bimodule $\cO{S} \otimes_{\cO{Q_0}} \lsub{\varphi}\cO{T}$ gives a direct summand $\cO\bar{S} \otimes_{\cO{Q_0}} \lsub{\varphi}\cO\bar{T}$ of $\cO\bar{P} \otimes_{\cO{P}} i\cO{G}i \otimes_{\cO{P}} \cO\bar{P}$ as an $(\cO\bar{S},\cO\bar{T})$-bimodule.
Note that there is an isomorphism of $(\cO\bar{S},\cO\bar{T})$-bimodules
\[ \cO\bar{S} \otimes_{\cO{Q_0}} \lsub{\varphi}\cO\bar{T} \cong \cO\bar{S} \otimes_{\cO\bar{Q}} \lsub{\bar{\varphi}}\cO(\bar{T}), \]
where $Q=Q_0R$ and $\bar{\varphi}\colon \bar{Q} \to \bar{T}$ is the map induced by $\varphi$.
By the assumption that  $R$ is strongly $\cF$-closed, it follows from (\ref{quotient-fusion}.\ref{quot-fs-item-4}) that $\cF/R = \bar{\cF}_R = \grp{\bar{\cF}_R}$ and the description in (\ref{quotient-fusion}.\ref{quot-fs-item-1}) for morphisms in the quotient system applies.
Thus $\bar{\varphi} \in \cF/R$.
By Green's indecomposable theorem, $\cO\bar{S} \otimes_{\cO\bar{Q}} \lsub{\bar{\varphi}}\cO(\bar{T})$ is indecomposable as an $(\cO\bar{S},\cO\bar{T})$-bimodule.
Thus all indecomposable direct summands of $\cO\bar{P} \otimes_{\cO{P}} i\cO{G}i \otimes_{\cO{P}} \cO\bar{P}$ as an $(\cO\bar{S},\cO\bar{T})$-bimodule are of the above form.
This shows (1).

(2) By the definition of $\cF/R$, $\bar{\varphi}$ is induced by certain $\varphi\colon S \to T$ in $\cF$ stabilizing $R$.
Since $S$ or $T$ is fully $\cF$-centralized, it follows from \cite[Theorem 8.7.1(ii)]{Linck18b} that $\lsub{\varphi}\cO{T}$ is an indecomposable direct summand of $i\cO{G}i$ as an $(\cO{S},\cO{T})$-bimodule (see the explanation in \ref{FS-source-alg}).
So
\[ \lsub{\bar{\varphi}}\cO\bar{T} \cong \cO\bar{S} \otimes_{\cO{S}} \lsub{\varphi}\cO{T} \otimes_{\cO{T}} \cO\bar{T} \]
is an indecomposable direct summand of $\cO\bar{S} \otimes_{\cO{S}} i\cO{G}i \otimes_{\cO{T}} \cO\bar{T}$, and then (2) follows from ($\ast$).

(3) The ``only if'' part follows immediately from (2).
For the ``if'' part, assume $\lsub{\bar{\varphi}}\cO\bar{T}$ is an indecomposable direct summand of $\cO\bar{P} \otimes_{\cO{P}} i\cO{G}i \otimes_{\cO{P}} \cO\bar{P}$ as an $(\cO\bar{T},\cO\bar{T})$-bimodule.
Then by (1), there is some $\bar{\psi} \in \Aut_{\cF/R}(\bar{T})$ such that $\lsub{\bar{\varphi}}\cO\bar{T} \cong \lsub{\bar{\psi}}\cO\bar{T}$.
By Lemma \ref{lem-inn-P}, $\bar{\varphi}$ differs from $\bar{\psi}$ by an inner automorphism of $\bar{T}$, and thus $\bar{\varphi} \in \Aut_{\cF/R}(\bar{T})$.

By Alperin's fusion theorem (see e.g. \cite[Theorem 8.2.8]{Linck18b}), $\cF$ is generated by $\Aut_{\cF}(T)$ with $T$ running over all fully $\cF$-centralized subgroups of $P$.
So (4) follows from the definition of $\cF/R$.
Finally, (5) follows from (3) and (4).
\end{proof}

\begin{rem}
Part (4) of Theorem \ref{thm-quot-fusion-source-alg} holds for any saturated fusion system $\cF$ (not necessarily coming from blocks) and any weakly $\cF$-closed subgroup $R$.
\end{rem}

%%%%%%%%%%%%%%%%%%%%%%%%%%%%%%%%%%%%%%%%%%%%%%%%%%%%%%%%%
\section{Proof of Theorem \ref{mainthm-1}}\label{sect:proof-mainthm-1}

Let $\cO$ be as in Convention \ref{conv-O} in this section.

\begin{thmwot}\label{setting-SEMT}
We begin with some basic settings and results for stable equivalences of Morita type between blocks of finite groups as in \cite[3.1]{Lich25}, which generalize slightly some arguments in the proof of \cite[Theorem 9.11.2]{Linck18b}.
Let $G_1$\,($G_2$, resp.) be a group and $b_1$\,($b_2$, resp.) be a block of $G_1$\,($G_2$, resp.) with a defect group $P_1$\,($P_2$, resp.).
Assume $\cO{G}_1b_1$ and $\cO{G}_2b_2$ are stably equivalent of Morita type.
We may assume that \emph{both $b_1$ and $b_2$ are not of defect zero}.

Since block algebras are indecomposable symmetric algebras, we can assume a stable equivalence of Morita type between $\cO{G}_1b_1$ and $\cO{G}_2b_2$ is induced by an indecomposable $(\cO{G_1}b_1,\cO{G_2}b_2)$-bimodule $M$ which is projective as left and right module and its dual $M^*$ (see \ref{SEMT}).
So
\[ M \otimes_{\cO{G}_2b_2} M^* \cong \cO{G}_1b_1 \oplus U_1, \quad M^* \otimes_{\cO{G}_1b_1} M \cong \cO{G}_2b_2 \oplus U_2 \]
for some projective $(\cO{G}_1b_1,\cO{G}_1b_1)$-bimodule $U_1$ and some projective $(\cO{G}_2b_2,\cO{G}_2b_2)$-bimodule $U_2$.
As an $\cO(G_1\times G_2)$-module, $M$ belongs to the block $b_1\otimes b_2^{\oo}$ of $G_1 \times G_2$ with $P_1 \times P_2$ as a defect group, where $b_2^{\oo}$ is the block of $\cO{G_2}$ corresponding to the block $b_2$ of $\cO{G_2}^{\op}$ induced by the group isomorphism $G_2 \to G_2^{\op},\ g\mapsto g^{-1}$; see for example \cite[Proposition 8.7.7]{Linck18b}.
Thus we may choose a vertex $R$ of $M$ contained in $P_1 \times P_2$.
As in \cite[3.1]{Lich25}, we can take an $\cO{R}$-source $V$ of $M$ such that
\begin{equation}
M \mid \cO{G_1}i_1 \otimes_{\cO{P_1}} \Ind_R^{P_1 \times P_2} V \otimes_{\cO{P_2}} i_2\cO{G_2},
\end{equation}
where $i_1$\,($i_2$, resp.) is a source idempotent in $(\cO{G_1}b_1)^{P_1}$\,($(\cO{G_2}b_2)^{P_2}$, resp.).
It is readily to see that $M^*$ has $(R^\sharp,V^*)$ as a vertex-source pair.

The main results of \cite[3.1, Remark 3.4]{Lich25} are listed as follows.

\begin{compactenum}[(\ref{setting-SEMT}.1)]\setcounter{enumi}{1}
\item
The $(\cO{G_1}b_1,\cO{P_2})$-bimodule $Mi_2$ can be decomposed as
\[ Mi_2 = N \oplus N', \]
where $N$ is an indecomposable non-projective $(\cO{G_1}b_1,\cO{P_2})$-bimodule and $N'$ is a projective $(\cO{G_1}b_1,\cO{P_2})$-bimodule
\footnote{In \cite[3.1]{Lich25}, the sentence from line -2 on page 247 to line 2 on page 248 states that ``Since $M^* \otimes_{\cO{H}c} M \cong \cO{G}b \oplus L$ for some projective $(\cO{G}b,\cO{G}b)$-bimodule $L$ and $M \otimes_{\cO{G}b} M^* \cong \cO{H}c \oplus L'$ for some projective $(\cO{H}c,\cO{H}c)$-bimodule $L'$, $M\otimes_{\cO{G}b}-$ also induces an equivalence as a functor from the $\cO$-stable category of $(\cO{G}b,\cO{P})$-bimodules to the $\cO$-stable category of $(\cO{H}c,\cO{Q})$-bimodules.''
Here, ``$\cO$-stable category of $(\cO{G}b,\cO{P})$-bimodules'' should be $\UPerf(\cO{G}b,\cO{P})$ or $\Uperf(\cO{G}b,\cO{P})$ and ``$\cO$-stable category of $(\cO{H}c,\cO{Q})$-bimodules'' should be $\UPerf(\cO{H}c,\cO{P})$ or $\Uperf(\cO{H}c,\cO{P})$.}.
\item
We have as $(\cO{G_1}b_1,\cO{P_2})$-bimodules that
\[ N \mid \cO{G_1}i_1 \otimes_{\cO{P_1}} \Ind_R^{P_1 \times P_2} V. \]
\item \label{item-N*0-N0-mid}
We have as $(\cO{P_2},\cO{P_2})$-bimodules that
\[ N^* \otimes_{\cO{G_1}b_1} N \mid \Ind_{R^\sharp}^{P_2\times P_1} V^* \otimes_{\cO{P_1}} i_1\cO{G_1}i_1 \otimes_{\cO{P_1}} \Ind_R^{P_1\times P_2} V, \]
where the isomorphism (\ref{equ-IndXGHV*}) is used.
\item \label{item-iOGi-N*0-N0}
We have as $(\cO{P_2},\cO{P_2})$-bimodules that
\[ i_2\cO{G_2}i_2 \oplus X \cong i_2M^* \otimes_{\cO{G_1}b_1} Mi_2 \cong (N^* \otimes_{\cO{G_1}b_1} N) \oplus Y, \]
where $X$ and $Y$ are both projective as $\cO(P_2\times P_2)$-modules.
\item\label{N^*0-N0-P-P-stable-basis}
$i_2M^* \otimes_{\cO{G_1}b_1} Mi_2$ and $N^* \otimes_{\cO{G_1}b_1} N$ have $P_2 \times P_2$-stable basis;
\item\label{item-OP-N*0-N0}
The non-projective indecomposable direct summands of $i_2\cO{G_2}i_2$ and of $N^* \otimes_{\cO{G_1}b_1} N$ as $\cO(P_2\times P_2)$-modules are the same up to isomorphism; in particular, $\cO{P_2} \mid N^* \otimes_{\cO{G_1}b_1} N$.
\item
(\cite[Remark 3.4]{Lich25})
$\Ind_R^{P_1\times P_2}V \mid i_1N$ and thus $\Ind_R^{P_1\times P_2}V \mid i_1Mi_2$ as $\cO(P_1\times P_2)$-modules.
\end{compactenum}
Finally, we denote by $\cF_1$ ($\cF_2$, resp.) the fusion system of $\cO{G_1}b_1$ ($\cO{G_2}b_2$, resp.) on $P_1$ ($P_2$, resp.) determined by $i_1$ ($i_2$, resp.). \hfill $\lrcorner$
\end{thmwot}

\begin{prop}\label{prop-first}
Keep the notation in Theorem \ref{mainthm-1} and \ref{setting-SEMT}.
\begin{compactenum}[(1)]
\item
The defects of $b_1$ and $b_2$ are equal, \emph{i.e.} $|P_1|=|P_2|$.
\item
The projections $\pi_1\colon R \to P_1$ and $\pi_2\colon R \to P_2$ are both surjective.
\item
There is an isomorphism of groups $\theta\colon P_1/R_1 \to P_2/R_2$ such that
\[ R = \Set{ (x_1,x_2) \in P_1 \times P_2 \mid x_2R_2 = \theta(x_1R_1) }, \]
and $R/(R_1R_2) \cong \Delta_\theta[P_1/R_1]$.
\end{compactenum}
\end{prop}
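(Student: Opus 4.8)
The plan is to establish parts~(2) and~(3) first, since~(3) is a formal consequence of~(2), and then to treat part~(1) separately.

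For part~(2) I would use Bouc's Mackey formula. By the definition of a stable equivalence of Morita type (see~\ref{SEMT} and~\ref{setting-SEMT}), $\cO{G}_1b_1$ is a direct summand, as an $(\cO{G}_1b_1,\cO{G}_1b_1)$-bimodule and hence as an $\cO(G_1\times G_1)$-module, of $M\otimes_{\cO{G}_2b_2}M^*\cong M\otimes_{\cO{G}_2}M^*$. Now $\cO{G}_1b_1$, viewed as an $\cO(G_1\times G_1)$-module, lies in the block $b_1\otimes b_1^{\oo}$ of $G_1\times G_1$ and has $\Delta P_1$ as a vertex; this is standard and reflects that $P_1$ is a defect group of $b_1$ (see, e.g., \cite[\S6.4]{Linck18b}). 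On the other hand, $V$ being a source of $M$ we have $M\mid\Ind_R^{G_1\times G_2}V$, and then $M^*\mid\Ind_{R^\sharp}^{G_2\times G_1}V^*$ by~\eqref{equ-IndXGHV*}, so applying Theorem~\ref{Bouc} to the tensor product over $\cO{G}_2$ shows that $M\otimes_{\cO{G}_2}M^*$ is a direct summand of $\bigoplus_{t}\Ind_{R\ast\lsup{(t,1)}R^\sharp}^{G_1\times G_1}W_t$, for suitable modules $W_t$ and $t$ ranging over a set of double coset representatives. A direct check from the definition of $\ast$ in Theorem~\ref{Bouc}, after rewriting membership in $R^\sharp$ as membership in $R$ via the convention of~\ref{notation-GH}, gives $R\ast\lsup{(t,1)}R^\sharp\leq\pi_1(R)\times\pi_1(R)$ for every~$t$. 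Consequently every vertex of an indecomposable summand of $M\otimes_{\cO{G}_2}M^*$ is, up to $G_1\times G_1$-conjugacy, contained in $\pi_1(R)\times\pi_1(R)$; applying this to an indecomposable summand of $\cO{G}_1b_1$ with vertex $\Delta P_1$ and projecting the resulting subgroup containment to the first component gives $P_1\leq_{G_1}\pi_1(R)$, whence $\pi_1(R)=P_1$. The surjectivity of $\pi_2$ follows symmetrically from $\cO{G}_2b_2$ being a summand of $M^*\otimes_{\cO{G}_1}M$ with vertex $\Delta P_2$, together with $R^\sharp\ast\lsup{(s,1)}R\leq\pi_2(R)\times\pi_2(R)$.

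Given part~(2), part~(3) is immediate from the structure theory of subgroups of a direct product recalled in~\ref{subgrp-direct-prod}, applied to $R\leq P_1\times P_2$ with $\pi_1(R)=P_1$ and $\pi_2(R)=P_2$: it produces $R_1\unlhd P_1$, $R_2\unlhd P_2$, a group isomorphism $\theta\colon P_1/R_1\to P_2/R_2$ with $R=\{(x_1,x_2)\in P_1\times P_2\mid x_2R_2=\theta(x_1R_1)\}$, and $R/(R_1R_2)\cong\Delta_\theta[P_1/R_1]$.

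For part~(1), which is in effect the known invariance of the defect under stable equivalence of Morita type, I would argue via Cartan matrices. Since $M$ and $M^*$ are projective as one-sided modules, the functors $M\otimes_{\cO{G}_2b_2}-$ and $M^*\otimes_{\cO{G}_1b_1}-$ are exact, send projectives to projectives, and are mutually inverse up to direct summands landing among projectives (as $U_1$ and $U_2$ are projective bimodules); hence they induce mutually inverse isomorphisms between the cokernels of the Cartan matrices of $b_1$ and $b_2$. These two Cartan matrices therefore have the same non-unit elementary divisors, in particular the same largest elementary divisor; since the largest elementary divisor of the Cartan matrix of a block equals $p$ raised to the defect of that block, we conclude $|P_1|=|P_2|$. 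The step I expect to be the main obstacle is part~(2): one has to set up Theorem~\ref{Bouc} with the $\sharp$-conventions correctly, verify the containment $R\ast\lsup{(t,1)}R^\sharp\leq\pi_1(R)\times\pi_1(R)$, and use the standard (but not entirely trivial) fact that the block algebra $\cO{G}_1b_1$ has $\Delta P_1$ as a vertex as a bimodule; once this is in place, parts~(1) and~(3) reduce, respectively, to invoking known facts and to pure bookkeeping.
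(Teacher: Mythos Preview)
Your proposal is correct. Parts~(1) and~(3) match the paper's argument essentially line for line: the paper also deduces~(1) from the invariance of the (non-unit) elementary divisors of the Cartan matrix under stable equivalence of Morita type, phrased there via the stable Grothendieck group and citing \cite{Xi08}, and deduces~(3) straight from~\ref{subgrp-direct-prod} once~(2) is known.

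Part~(2) is where you take a genuinely different route. The paper works at the \emph{source-algebra} level: it uses (\ref{setting-SEMT}.\ref{item-N*0-N0-mid}) and (\ref{setting-SEMT}.\ref{item-OP-N*0-N0}) to obtain $\cO P_2 \mid \Ind_{R^\sharp}^{P_2\times P_1} V^* \otimes_{\cO P_1} i_1\cO G_1 i_1 \otimes_{\cO P_1} \Ind_R^{P_1\times P_2} V$, then applies Bouc's formula twice to rewrite the right-hand side as $\Ind_{\pi_2(R)\times\pi_2(R)}^{P_2\times P_2}(\cdots)$, and concludes $\pi_2(R)=P_2$ since $\cO P_2$ has vertex $\Delta P_2$. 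You instead stay at the \emph{group-algebra} level: $\cO G_1 b_1 \mid M\otimes_{\cO G_2}M^* \mid \Ind_R^{G_1\times G_2}V \otimes_{\cO G_2} \Ind_{R^\sharp}^{G_2\times G_1}V^*$, one application of Bouc gives summands induced from $R\ast\lsup{(t,1)}R^\sharp\leq\pi_1(R)\times\pi_1(R)$, and $\cO G_1 b_1$ having vertex $\Delta P_1$ forces $\pi_1(R)=P_1$. Your version is cleaner in that it avoids the source-algebra preliminaries of~\ref{setting-SEMT} and needs Bouc only once; the paper's version has the virtue of staying inside the $(\cO P_i,\cO P_j)$-bimodule framework that the rest of the proof of Theorem~\ref{mainthm-1} uses anyway. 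Both are valid and standard.
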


\begin{proof}
For (1), note first that stable equivalence of Morita type between $\cO{G}_1b_1$ and $\cO{G}_2b_2$ induces an stable equivalence of Morita type between $kG_1\bar{b}_1$ and $kG_2\bar{b}_2$.
Then by the arguments in \cite[\S5]{Xi08} (see also \cite[Proposition 5.7.4]{Zimmer14} or \cite[Proposition 4.14.13]{Linck18a}.), there is an isomorphism between the stable Grothendieck groups $G_0(kG_1\bar{b}_1)/\Pr(kG_1\bar{b}_1)$ and $G_0(kG_2\bar{b}_2)/\Pr(kG_2\bar{b}_2)$.
Thus the Cartan matrices of $kG_1\bar{b}_1$ and $kG_2\bar{b}_2$ have the same elementary divisors.
Since the unique greatest elementary divisor of the Cartan matrix of any block of finite groups is just the order of the defect groups (see \emph{e.g.} \cite[Chapter 3, Theorem 6.35]{NT}), then we have that $|P_1|=|P_2|$.

(2) is contained in \cite[Theorem 6.9]{Puig99} and an alternative proof is given in \cite[Lemma 3.2]{Lich25} (note that the proof there does not need the assumption that $\ochar\cO=0$; in fact this assumption is only needed in \cite[3.3]{Lich25}).
But the proof in \cite[Lemma 3.2]{Lich25} can be even further simplified as follows.
It follows from (\ref{setting-SEMT}.\ref{item-N*0-N0-mid}) and (\ref{setting-SEMT}.\ref{item-OP-N*0-N0}) that as $\cO(P_2\times P_2)$-modules
\[ \cO{P}_2 \mid \Ind_{R^\sharp}^{P_2\times P_1} V^* \otimes_{\cO{P_1}} A_1 \otimes_{\cO{P_1}} \Ind_R^{P_1\times P_2} V, \quad A_1:=i_1\cO{G}_1i_1. \]
Then using Bouc's formula \ref{Bouc} twice shows that
\begin{align*}
& \Ind_{R^\sharp}^{P_2\times P_1} V^* \otimes_{\cO{P_1}} A_1 \otimes_{\cO{P_1}} \Ind_R^{P_1\times P_2} V \\
\cong\ &\Ind_{R^\sharp}^{P_2\times P_1} V^* \otimes_{\cO{P_1}} \Ind_{P_1\times P_1}^{P_1\times P_1}A_1 \otimes_{\cO{P_1}} \Ind_R^{P_1\times P_2} V \\
\cong\ & \Ind_{\pi_2(R)\times P_1}^{P_2\times P_1} (V^* \otimes_{\cO{R_1}} A_1) \otimes_{\cO{P_1}} \Ind_R^{P_1\times P_2} V \\
\cong & \Ind_{\pi_2(R)\times \pi_2(R)}^{P_2\times P_2} (V^* \otimes_{\cO{R}_1} A_1 \otimes_{\cO{R}_1}V),
\end{align*}
where we use $R^\sharp\ast(P_1\times P_1)=\pi_2(R)\times P_1$ and $(\pi_2(R)\times P_1)\ast(P_1\times P_2)=\pi_2(R)\times \pi_2(R)$.
Since as an $\cO(P_2\times P_2)$-module, $\cO{P}_2$ has $\Delta(P_2)$ as a vertex, so we must have that $\pi_2(R)=P_2$; symmetric arguments shows that $\pi_1(R)=P_1$.

Finally, (3) follows from the description of subgroups of direct products of groups in \ref{subgrp-direct-prod}.
\end{proof}

\begin{prop}\label{prop-strongly-closed}
Keep the notation in Theorem \ref{mainthm-1} and \ref{setting-SEMT}.
Then $R_1$ is strongly $\cF_1$-closed if and only if $R_2$ is strongly $\cF_2$-closed.
\end{prop}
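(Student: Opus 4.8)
The plan is to deduce both implications of the ``if and only if'' from a single one, using the self-duality $M\leftrightarrow M^*$, after reformulating ``strongly closed'' as a condition on the vertices of the indecomposable direct summands of a quotiented source algebra and transporting this condition through $W:=\Ind_R^{P_1\times P_2}V$ by two applications of Bouc's formula. First I would reduce to one implication. By Theorem \ref{mainthm-1}(1), already available from \ref{SEMT}(3) and \ref{setting-SEMT}, $M^*$ induces the inverse stable equivalence and has $(R^\sharp,V^*)$ as a vertex--source pair, so the whole of \ref{setting-SEMT} applies to $M^*$ with the same source idempotents $i_2,i_1$; since $R^\sharp\cap P_2$ and $R^\sharp\cap P_1$ are the images of $R\cap(1\times P_2)$ and $R\cap(P_1\times 1)$, passing from $M$ to $M^*$ interchanges the data $(\cF_1,R_1)$ with $(\cF_2,R_2)$. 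Hence the two halves of the proposition are exchanged by $M\leftrightarrow M^*$, and it suffices to prove: if $R_1$ is strongly $\cF_1$-closed, then $R_2$ is strongly $\cF_2$-closed. I would also record that $R_i\unlhd P_i$ by Proposition \ref{prop-first}(2),(3) and \ref{subgrp-direct-prod}, so that $\bar P_i:=P_i/R_i$ and $B_i:=\cO\bar P_i\otimes_{\cO P_i}A_i\otimes_{\cO P_i}\cO\bar P_i$ (with $A_i:=i_i\cO G_ii_i$) are defined, and $\theta$ identifies $R/(R_1R_2)$ with $\Delta_\theta(\bar P_1)$.

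Next I would establish the following criterion: \emph{$R_i$ is strongly $\cF_i$-closed if and only if every indecomposable direct summand of $B_i$ as an $\cO(\bar P_i\times\bar P_i)$-module has a twisted-diagonal vertex}, meaning a vertex that intersects $1\times\bar P_i$ and $\bar P_i\times 1$ trivially. By \cite[Theorem 8.7.1(i)]{Linck18b} together with Green's indecomposability theorem, $A_i\cong\bigoplus_{(Q,\varphi)}m_{Q,\varphi}\,\Ind_{\Delta_\varphi Q}^{P_i\times P_i}\cO$ as $\cO(P_i\times P_i)$-modules, the sum running over $\cF_i$-conjugacy classes of pairs $(Q,\varphi)$ with $Q\le P_i$ and $\varphi\colon Q\to P_i$ a morphism in $\cF_i$; applying $\cO\bar P_i\otimes_{\cO P_i}(-)\otimes_{\cO P_i}\cO\bar P_i$ and using Lemma \ref{lem-res-ind-quotients}(1) with a standard permutation-module computation, $B_i\cong\bigoplus_{(Q,\varphi)}m_{Q,\varphi}\,\Ind_{\overline{\Delta_\varphi Q}}^{\bar P_i\times\bar P_i}\cO$, where $\overline{\Delta_\varphi Q}$ is the image of $\Delta_\varphi Q$ in $\bar P_i\times\bar P_i$, again a sum of indecomposables. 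Since $\overline{\Delta_\varphi Q}\cap(1\times\bar P_i)=1\times\overline{\varphi(Q\cap R_i)}$ (and symmetrically on the other factor), the vertex $\overline{\Delta_\varphi Q}$ is twisted-diagonal precisely when $\varphi(Q\cap R_i)=\varphi(Q)\cap R_i$; the ``if'' direction of the criterion then follows by combining Alperin's fusion theorem \cite[Theorem 8.2.8]{Linck18b}, the modified \cite[Theorem 8.7.1(ii)]{Linck18b} recalled in \ref{FS-source-alg}, and the self-duality of $A_i$, while the ``only if'' direction is essentially Theorem \ref{thm-quot-fusion-source-alg}(1).

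Then comes the transfer, which is the heart of the argument. Assuming $R_1$ strongly $\cF_1$-closed, take a non-projective indecomposable direct summand $\Ind_{\Delta_{\varphi'}Q'}^{P_2\times P_2}\cO$ of $A_2$; by (\ref{setting-SEMT}.\ref{item-OP-N*0-N0}) it is isomorphic to a summand of $N^*\otimes_{\cO G_1 b_1}N$, hence by (\ref{setting-SEMT}.\ref{item-N*0-N0-mid}) to a summand of $W^*\otimes_{\cO P_1}A_1\otimes_{\cO P_1}W$. For each summand $\Ind_{\Delta_\varphi Q}^{P_1\times P_1}\cO$ of $A_1$, applying Bouc's formula (Theorem \ref{Bouc}) to $W^*\otimes_{\cO P_1}\Ind_{\Delta_\varphi Q}^{P_1\times P_1}\cO$ and then to the result tensored over $\cO P_1$ with $W$ — the double-coset sums collapsing because $\pi_1(R)=P_1$ and $\pi_2(R)=P_2$ — exhibits $W^*\otimes_{\cO P_1}\Ind_{\Delta_\varphi Q}^{P_1\times P_1}\cO\otimes_{\cO P_1}W$ as induced from a subgroup $X_{Q,\varphi}\le P_2\times P_2$ whose image in $\bar P_2\times\bar P_2$ is exactly $(\theta\times\theta)(\overline{\Delta_\varphi Q})$. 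Consequently $\Delta_{\varphi'}Q'$ is subconjugate in $P_2\times P_2$ to some $X_{Q,\varphi}$ with $m_{Q,\varphi}>0$, so $\overline{\Delta_{\varphi'}Q'}$ is subconjugate in $\bar P_2\times\bar P_2$ to $(\theta\times\theta)(\overline{\Delta_\varphi Q})$; by the criterion $\overline{\Delta_\varphi Q}$, hence its $(\theta\times\theta)$-image, is twisted-diagonal, and a subgroup of a conjugate of a twisted diagonal is again twisted-diagonal, so $\overline{\Delta_{\varphi'}Q'}$ is twisted-diagonal. The criterion applied to $B_2$ then gives that $R_2$ is strongly $\cF_2$-closed, and the reverse implication follows by applying what has been proved to $M^*$.

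I expect the main work to be the two-fold application of Bouc's formula that computes $X_{Q,\varphi}$, together with the observation that one need control only the \emph{vertices} of the relevant $\cO(P_2\times P_2)$-summands: the arbitrary $\cO R$-source $V$ enters the induced modules only through their ``middle'' tensor factors and is therefore irrelevant, and the computation yields just the implication ``$R_1$ strongly closed $\Rightarrow$ $R_2$ strongly closed'', the converse being accessible only through the $M\leftrightarrow M^*$ symmetry. The other delicate point is the ``if'' direction of the criterion, where Alperin's fusion theorem, \cite[Theorem 8.7.1(ii)]{Linck18b} in the form of \ref{FS-source-alg}, and self-duality of the source algebra must be combined to pass from the vertex condition on the summands of $B_i$ back to strong closure of $R_i$.
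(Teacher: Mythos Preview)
Your approach is correct and genuinely different from the paper's. The paper argues element-wise: it fixes $S_2\le R_2$ and, after Alperin's reduction, a $\varphi_2\in\Aut_{\cF_2}(T_2)$ with $T_2$ fully centralised; then it uses \cite[Theorem 8.7.1(iii)]{Linck18b} to realise $\lsub{\varphi_2}\cO T_2$ as a $(T_2,T_2)$-summand of $A_2$, restricts further to the tiny pair $(S_2,\varphi_2(S_2))$, and pushes this through $W^*\otimes A_1\otimes W$ by Mackey and Bouc, extracting the conclusion $\varphi_2(S_2)\le R_2$ from a vertex bound. You instead work entirely at the $(P_i,P_i)$-level: you package strong closure as a vertex condition on the indecomposable summands of $B_i$, transfer that condition through $W$ by two Bouc applications whose double-coset sums collapse (since $\pi_1(R)=P_1$, $\pi_2(R)=P_2$), and then unpack on the other side. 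The gain is a cleaner Bouc computation (a single term each time) and a more structural statement; the cost is that the ``if'' direction of your criterion is not for free. Your sketch for it is right in spirit but slightly mis-cited: what one really uses is that for $T$ fully $\cF_2$-centralised and $\alpha\in\Aut_{\cF_2}(T)$ the summand $\lsub{\alpha}\cO T$ of $\Res^{P_2\times P_2}_{T\times T}A_2$ must, via Mackey and Green indecomposability, have its vertex $\Delta_\alpha T$ subconjugate in $P_2\times P_2$ to some $\Delta_{\varphi'}Q'$ with $m_{Q',\varphi'}>0$; passing to $\bar P_2\times\bar P_2$ and using that twisted-diagonality is inherited by subgroups of conjugates then gives $\alpha(T\cap R_2)\le R_2$, whence strong closure by Alperin. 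Self-duality of $A_i$ is not actually needed for this step.
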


\begin{proof}
We prove that the assertion that $R_1$ is strongly $\cF_1$-closed implies that $R_2$ is strongly $\cF_2$-closed; the converse is similar.
Thus we have to show that $\varphi_2(S_2)\leq R_2$ for any $S_2\leq R_2$ and any $\varphi_2\colon S_2 \to P_2$ in $\cF_2$.
By Alperin's fusion theorem (see e.g. \cite[Theorem 8.2.8]{Linck18b} and note that ``fully normalized'' implies ``fully centralized'') and an easy induction, we may assume that there is a fully $\cF_2$-centralized subgroup $T_2$ of $P_2$ containing both $S_2$ and $\varphi_2(S_2)$ such that $\varphi_2$ can be extended to a morphism in $\Aut_{\cF_2}(T_2)$; for convenience, we denote such an extension still by $\varphi_2$.

By \cite[Theorem 8.7.1(iii)]{Linck18b}, $\lsub{\varphi_2}\cO T_2$ is a direct summand of $i_2\cO G_2i_2$ as an $(\cO T_2,\cO T_2)$-bimodule.
Then by (\ref{setting-SEMT}.\ref{item-N*0-N0-mid}) and (\ref{setting-SEMT}.\ref{item-iOGi-N*0-N0}), we have as $(\cO{T_2},\cO{T_2})$-bimodules that
\[ \lsub{\varphi_2}\cO T_2 \mid \Ind_{R^\sharp}^{P_2\times P_1} V^* \otimes_{\cO{P_1}} i_1\cO{G_1}i_1 \otimes_{\cO{P_1}} \Ind_R^{P_1\times P_2} V. \]
By restriction, we have as $(\cO{S_2},\cO{\varphi_2(S_2)})$-bimodules that
\[ \lsub{\varphi_2}\cO \varphi_2(S_2) \mid \Ind_{R^\sharp}^{P_2\times P_1} V^* \otimes_{\cO{P_1}} i_1\cO{G_1}i_1 \otimes_{\cO{P_1}} \Ind_R^{P_1\times P_2} V. \]
Then by \cite[Theorem 8.7.1(i)]{Linck18b}, there exist $Q_1\leq P_1$ and $\varphi_1\colon Q_1 \to P_1 \in \cF_1$ such that
\[ \lsub{\varphi_2}\cO \varphi_2(S_2) \mid \Ind_{R^\sharp}^{P_2\times P_1} V^* \otimes_{\cO{P_1}} \Ind_{\Delta_{\varphi_1}Q_1}^{P_1 \times P_1} \cO \otimes_{\cO{P_1}} \Ind_R^{P_1\times P_2} V. \tag{$\ast$}\]

It follows from the McKay formula, the surjection of the projection of $R$ to the first component and the structure of $R$ from Proposition \ref{prop-first} that
\[ \Res^{P_2\times P_1}_{S_2\times P_1}\Ind_{R^\sharp}^{P_2\times P_1}V^* \cong \Ind_{(S_2\times P_1)\cap R^\sharp}^{S_2\times P_1} V^* = \Ind_{S_2\times R_2}^{S_2\times P_1} V^* \]
and
\[ \Res^{P_1\times P_2}_{P_1\times\varphi_2(S_2)} \Ind_R^{P_1\times P_2} V \cong \Ind_{[P_1\times\varphi_2(S_2)]\cap R}^{P_1\times\varphi_2(S_2)} V. \]
Thus by Bouc's formula (Theorem \ref{Bouc}), $\Res^{P_2\times P_1}_{S_2\times P_1}\left( \Ind_{R^\sharp}^{P_2\times P_1} V^* \otimes_{\cO{P_1}} \Ind_{\Delta_{\varphi_1}Q_1}^{P_1 \times P_1} \cO \right)$ is a direct sum of $(\cO S_2, \cO P_1)$-bimodules of the form
\[ \Ind_{(S_2\times R_1)*\lsup{(t_1,1)}\Delta_{\varphi_1}Q_1}^{S_2\times P_1} V^* \]
for some $t_1\in P_1$.
By the definition of the operation $*$ in Theorem \ref{Bouc}, we have that
\[ (S_2\times R_1)*\lsup{(t_1,1)}\Delta_{\varphi_1}Q_1 = S_2 \times \varphi_1(Q_1\cap R_1). \]
Again by Bouc's formula, $\Ind_{S_2 \times \varphi_1(Q_1\cap R_1)}^{S_2\times P_1} V^* \otimes_{\cO P_1} \Ind_{[P_1\times\varphi_2(S_2)]\cap R}^{P_1\times\varphi_2(S_2)} V$ is a direct sum of $(\cO S_2, \cO\varphi_2(S_2))$-bimodules of the form
\[ \Ind_{[S_2 \times \varphi_1(Q_1\cap R_1)] * \lsup{(u_1,1)}\left\{[P_1\times\varphi_2(S_2)]\cap R\right\}}^{S_2\times\varphi_2(S_2)} V^* \otimes_{\varphi_1(R_1\cap Q_1)\cap R_1} \lsup{(u_1,1)}V \]
for some $u_1\in P_1$.
Again by the definition of the operation $*$ in Theorem \ref{Bouc}, we have that
\[ [S_2 \times \varphi_1(Q_1\cap R_1)] * \lsup{(u_1,1)}\left\{[P_1\times\varphi_2(S_2)]\cap R\right\} = S_2 \times \left[ \varphi_2(S_2) \cap \tau_2^{-1}(\theta(\overline{\varphi_1(R_1\cap Q_1)^{u_1}})) \right], \]
where $\tau_2\colon P_2 \to P_2/R_2$ is the natural surjective map and we use bar convention for $\bar{P}_1 = P_1/R_1$ and $\bar{P}_2=P_2/R_2$.

By ($\ast$) and the above paragraph, as an $(\cO S_2,\cO\varphi_2(S_2))$-bimodule, $\lsub{\varphi_2}\cO \varphi_2(S_2)$ is a direct summand of
\[ \Ind_{S_2 \times \left[ \varphi_2(S_2) \cap \tau_2^{-1}(\theta(\overline{\varphi_1(R_1\cap Q_1)^{u_1}})) \right]}^{S_2\times\varphi_2(S_2)} V^* \otimes_{\varphi_1(R_1\cap Q_1)\cap R} \lsup{(u_1,1)}V. \]
Since $\Delta_{\varphi_2}S_2$ is a vertex of $\lsub{\varphi_2}\cO \varphi_2(S_2)$, $\Delta_{\varphi_2}S_2$ is $S_2\times\varphi_2(S_2)$-conjugate to a subgroup of $S_2 \times \left[ \varphi_2(S_2) \cap \tau_2^{-1}(\theta(\overline{\varphi_1(R_1\cap Q_1)^{u_1}})) \right]$.
Thus we must have
\[ \tau_2(\varphi_2(S_2)) \leq \theta(\overline{\varphi_1(R_1\cap Q_1)^{u_1}}). \]
Now, since $R_1$ is strongly $\cF_1$-closed in $P_1$, $\varphi_1(R_1\cap Q_1) \leq R_1$, thus $\theta(\overline{\varphi_1(R_1\cap Q_1)^{u_1}})=\bar{1}$ and so $\varphi_2(S_2) \leq R_2$.
\end{proof}

\begin{thmwot}
\begin{proof}[Proof of Theorem \ref{mainthm-1}]
It remains to prove part (6) of Theorem \ref{mainthm-1} since part (7) is a special case of part (6).

We will use the bar convention simutaneously for both $\bar{P}_1 = P_1/R_1$ and $\bar{P}_2 = P_2/R_2$, and set $\bar{\cF}_1=\cF_1/R_1$ and $\bar{\cF}_2=\cF_2/R_2$.
We will show that $\theta^{-1}\circ\bar{\varphi}\circ\theta \in \bar{\cF}_1$ for any fully $\cF_2$-centralized subgroup $T_2$ of $P_2$ containing $R_2$ and $\bar{\varphi} \in \Aut_{\bar{\cF}_2}(\bar{T}_2)$.

By Theorem \ref{thm-quot-fusion-source-alg}(2) and Lemma \ref{lem-res-ind-quotients}(2), we have as $(\cO\bar{T}_2,\cO\bar{T}_2)$-bimodules that
\[ \lsub{\bar{\varphi}}\cO\bar{T}_2 \mid \cO\bar{T}_2 \otimes_{\cO{T_2}} i_2\cO{G_2}i_2 \otimes_{\cO{T_2}} \cO\bar{T}_2. \]
Thus $\lsub{\bar{\varphi}}\cO\bar{T}_2$ is isomorphic to an indecomposable direct summand of
\[ \cO\bar{T}_2 \otimes_{\cO{T_2}} \Ind_{R^\sharp}^{P_2\times P_1} V^* \otimes_{\cO{P_1}} i_1\cO{G_1}i_1 \otimes_{\cO{P_1}} \Ind_R^{P_1\times P_2} V \otimes_{\cO{T_2}} \cO\bar{T}_2 \]
by (\ref{setting-SEMT}.\ref{item-iOGi-N*0-N0}) and (\ref{setting-SEMT}.\ref{item-N*0-N0-mid}).
Then by the bimodule structure of source algebras in \cite[Theorem 8.7.1]{Linck18b}, $\lsub{\bar{\varphi}}\cO\bar{T}_2$ is isomorphic to an indecomposable direct summand of
\[ \cO\bar{T}_2 \otimes_{\cO{T_2}} \Ind_{R^\sharp}^{P_2\times P_1} V^* \otimes_{\cO{T_1}}  \lsub{\psi}\Ind_R^{P_1\times P_2} V \otimes_{\cO{T_2}} \cO\bar{T}_2, \]
where $T_1$ is a subgroup of $P_1$ and $\psi\colon T_1 \to P_1$ is in $\cF_1$.

It follows from the structure of $R$ in Proposition \ref{prop-first}(2) and Mackey formula that
\[ \Res^{P_2 \times P_1}_{T_2 \times P_1} \Ind_{R^\sharp}^{P_2 \times P_1} V^* \cong \Ind_{(T_2 \times P_1) \cap R^\sharp}^{T_2 \times P_1} V^*. \]
Restricting the above module further to $T_2 \times T_1$ and using Mackey formula, we have that $\Res^{P_2 \times P_1}_{T_2 \times T_1} \Ind_{R^\sharp}^{P_2 \times P_1} V^*$ is a direct sum of $\cO(T_2\times T_1)$-modules of the form
\[ \Ind_{(T_2 \times T_1) \cap \lsup{(1,s_1)}R^\sharp}^{T_2 \times T_1} U,  \]
where $s_1 \in P_1$ and $U = \lsup{(1,s_1)}V^*$.
Similarly, $\Res^{P_1 \times P_2}_{\psi(T_1) \times T_2} \Ind_R^{P_1 \times P_2} V$ is a direct sum of $\cO(T_1\times T_2)$-modules of the form
\[ \Ind_{[\psi(T_1) \times T_2] \cap \lsup{(s'_1,1)}R}^{\psi(T_1) \times T_2} W_0,  \]
where $s'_1 \in P_1$ and $W_0 = \lsup{(s'_1,1)}V$.
Note that
\[ \lsub{\psi}\Ind_{[\psi(T_1) \times T_2] \cap \lsup{(s'_1,1)}R}^{\psi(T_1) \times T_2} W_0 \cong \Ind_{(\psi^{-1}\times1)\left([\psi(T_1) \times T_2] \cap \lsup{(s'_1,1)}R\right)}^{T_1 \times T_2} W, \]
where $W = \lsub{\psi\times1}W_0$.
Set $T=(T_2 \times T_1) \cap \lsup{(1,s_1)}R^\sharp$ and $T'=(\psi^{-1}\times1)\left([\psi(T_1) \times T_2] \cap \lsup{(s'_1,1)}R\right)$, then
\[ T = \Set{ (t_2,t_1) \in T_2 \times T_1 ~\middle|~ \bar{t}_2 = \left(\theta\circ c_{\bar{s}_1^{-1}}\right)(\bar{t}_1) }, \]
where $c_{\bar{s}_1^{-1}}$ is the inner automorphism of $\bar{P}_1$ induced by $\bar{s}_1^{-1}$, and
\[ T' = \Set{ (t_1,t_2) \in T_1 \times T_2 ~\middle|~ \bar{t}_2 = \left(\theta\circ c_{\bar{s'_1}^{-1}}\circ\bar{\psi}\right)(\bar{t}_1) }, \]
where $c_{\bar{s'_1}^{-1}}$ is the inner automorphism of $\bar{P}_1$ induced by $\bar{s'_1}^{-1}$ and $\bar{\psi}\colon \bar{T}_1 \to \bar{P}_1$ induced by $\psi$ is in $\bar{\cF}_1$ with $\bar{T}_1 = T_1R_1/R_1$.
 
By Lemma \ref{lem-res-ind-quotients}\,(1)(3), we have as $(\cO\bar{T}_2,\cO T_1)$-bimodules that
\begin{align*}
& \cO\bar{T}_2 \otimes_{\cO{T_2}} \Ind_{T}^{T_2\times T_1} U \cong \cO[(T_2\times T_1)/(R_2\times1)] \otimes_{\cO{(T_2\times T_1)}} \Ind_{T}^{T_2\times T_1} U \\
\cong\ & \Ind_{T/(R_2\times1)}^{\bar{T}\times T_1} \left(\cO[T/(R_2\times1)] \otimes_{\cO T} U\right) \cong \Ind_{T/(R_2\times1)}^{\bar{T}_2\times T_1} (\cO \otimes_{\cO R_2} U)
\end{align*}
and similarly that
\[ \Ind_{T'}^{T_1\times T_2} W \otimes_{\cO{T_2}} \cO\bar{T}_2 \cong \Ind_{T'/(1\times R_2)}^{T_1\times \bar{T}_2} (W \otimes_{\cO R_2} \cO). \]
Then it follows that $\lsub{\bar{\varphi}}\cO\bar{T}_2$ is isomorphic to an indecomposable direct summand of
\begin{align*}
& \cO\bar{T}_2 \otimes_{\cO{T_2}} \Ind_{T}^{T_2\times T_1} U \otimes_{\cO{T_1}} \Ind_{T'}^{T_1\times T_2} W \otimes_{\cO{T_2}} \cO\bar{T}_2 \\
\cong\ & \Ind_{T/(R_2\times1)}^{\bar{T}_2\times T_1} U_1 \otimes_{\cO{T_1}}  \Ind_{T'/(1\times R_2)}^{T_1\times \bar{T}_2} W_1,
\end{align*}
where $U_1 = \cO \otimes_{\cO R_2} U$ and $W_1 = W \otimes_{\cO R_2} \cO$.
Thus by Bouc's formula \ref{Bouc}, $\lsub{\bar{\varphi}}\cO\bar{T}_2$ is isomorphic to an indecomposable direct summand of
\[ \Ind_{[T/(R_2\times1)]*\lsup{(r,1)}[T'/(1\times R_2)]}^{\bar{T}_2\times\bar{T}_2} L, \]
where $r \in T_1$, $L$ is an $\cO[T/(R_2\times1)*\lsup{(r,1)}(T'/(1\times R_2))]$-module, and
\begin{align*}
& \bar{T}_0:= [T/(R_2\times1)]*\lsup{(r,1)}[T'/(1\times R_2)] \\
=\ & \Set{ (\bar{t}_2,\bar{t'_2}) \in \bar{T}_2 \times \bar{T}_2 ~\middle|~ \begin{array}{c} \exists\, t_1\in T_1,\ \bar{t}_2 = \left(\theta\circ c_{\bar{s}_1^{-1}}\right)(\bar{t}_1)\\ \bar{t'_2} = \left(\theta\circ c_{\bar{s'_1}^{-1}}\circ\bar{\psi}\right)(\bar{t_1^{r}}) \end{array} }
\end{align*}
by the definition of the operation $*$ in Theorem \ref{Bouc}.
Note that although $t_1$ in the above equation is not unique, the image $\bar{t}_1$ in the quotient group $\bar{T}_1$ is unique.
Thus $\bar{T}_0 \cap (\bar{T}_2\times1)$ and $\bar{T}_0\cap(1\times\bar{T}_2)$ are both trivial.

Since $\lsub{\bar{\varphi}}\cO\bar{T}_2$ has a vertex $\Delta_{\bar{\varphi}}\bar{T}_2$ as $\cO(\bar{T}_2\times\bar{T}_2)$-module, the projections of $\bar{T}_0$ to the first and second components are both surjective.
So $\bar{T}_0 = \Delta_{\theta\circ\bar{\psi'}\circ\theta^{-1}}\bar{T}_2$, where $\bar{\psi'} = c_{\bar{s'_1}^{-1}} \circ \bar{\psi} \circ c_{\bar{r}^{-1}} \circ c_{\bar{s}_1}$ is in $\bar{\cF}_1$.
By Green's indecomposable theorem, there is an indecomposable submodule $L_0$ of $L$ such that
\[ \lsub{\bar{\varphi}}\cO\bar{T}_2 \cong \Ind_{\Delta_{\theta\circ\bar{\psi'}\circ\theta^{-1}}\bar{T}_2}^{\bar{T}_2\times\bar{T}_2} L_0. \]
Since $\lsub{\bar{\varphi}}\cO\bar{T}_2$ is a trivial source module, $L_0$ is in fact the trivial module.
So
\[ \lsub{\bar{\varphi}}\cO\bar{T}_2 \cong \lsub{\theta\circ\bar{\psi'}\circ\theta^{-1}}\cO\bar{T}_2. \]
Then it follows from Lemma \ref{lem-inn-P} that $\bar{\varphi}$ differs from $\theta\circ\bar{\psi'}\circ\theta^{-1}$ by an inner automorphism of $\bar{T}_2$.
Thus we have $\theta^{-1}\circ\bar{\varphi}\circ\theta \in \bar{\cF}_1$.

So by Proposition \ref{thm-quot-fusion-source-alg}\,(4), $\bar{\cF}_2$ can be identified with a fusion subsystem of $\bar{\cF}_1$  via the isomorphism $\theta^{-1}$.
Symmetric arguments shows that $\bar{\cF}_1$ can be identified with a fusion subsystem of $\bar{\cF}_2$ via the isomorphism $\theta$.
Then the assertion of Theorem \ref{mainthm-1}\,(6) follows and the proof completes.
\end{proof}
\end{thmwot}

%%%%%%%%%%%%%%%%%%%%%%%%%%%%%%%%%%%%%%%%%%%%%%%%%%%%%%%%%
\section{Proof of Theorem \ref{mainthm-2}}\label{sect:sep-equiv}

In this section, let $\cO$ be as in Convention \ref{conv-O} and assume furthermore that \emph{$\cO$ is of characteristic $0$}; denote by $K$ the field of fractions of $\cO$ and set $J(\cO)=(\pi)$.
We begin with a lemma from \cite{Linck18a}.

\begin{lem}[{\cite[Proposition 4.13.17]{Linck18a}}]\label{lem-sep-div-d}
Let $A,B$ be two $\cO$-algebras and $B$ separably divides $A$.
If $d$ is a positive integer such that $\pi^d$ annihilates $\UHom_A(U,U')$ for any $A$-module $U,U'$, then $\pi^d$ annihilates $\UHom_B(V,V')$ for any $A$-module $V,V'$.
\end{lem}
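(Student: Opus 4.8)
The plan is to exploit the hypothesis that $B$ separably divides $A$ directly at the level of stable Hom-spaces. By definition, there are bimodules $M$ (an $(A,B)$-bimodule) and $N$ (a $(B,A)$-bimodule), each projective as a one-sided module on both sides, together with an isomorphism $M\otimes_B N \cong A \oplus U$ of $(A,A)$-bimodules for some $(A,A)$-bimodule $U$. The functors $N\otimes_A -$ and $M\otimes_B -$ then carry $A$-modules to $B$-modules and back; since $M$ and $N$ are projective as one-sided modules, tensoring with them sends projective modules to projective modules, hence these functors descend to the stable categories $\underline{\Mod}\,A$ and $\underline{\Mod}\,B$. The key formal point is that for any $B$-module $V$, the $A$-module $M\otimes_B V$ satisfies $N\otimes_A M\otimes_B V$ having $V$ as a direct summand up to a projective summand is \emph{not} what we have (that would be a separable equivalence); what we do have is in the other order: $V$ itself need not be recovered, but instead we should push to $A$-modules and use that $M\otimes_B N\otimes_A W \cong W \oplus (U\otimes_A W)$ for $A$-modules $W$.

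\textbf{Main step.} The cleanest route is: given $B$-modules $V,V'$, consider the $A$-modules $M\otimes_B V$ and $M\otimes_B V'$. There is a natural map $\underline{\Hom}_B(V,V') \to \underline{\Hom}_A(M\otimes_B V, M\otimes_B V')$ given by $M\otimes_B -$; I would then produce a map back by applying $N\otimes_A -$ and using the isomorphism $M\otimes_B N \cong A\oplus U$ to split off a copy of $V$. Concretely, $N\otimes_A M \otimes_B V$ need not contain $V$ as a summand, so instead I should argue as follows: the functor $M \otimes_B -$ is \emph{faithful on stable categories} in a strong quantitative sense. Indeed, if $f\colon V\to V'$ is a $B$-homomorphism such that $M\otimes_B f$ factors through a projective $A$-module, then applying $N\otimes_A -$ shows $N\otimes_A M\otimes_B f$ factors through a projective $B$-module; but we do not control $N\otimes_A M$. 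So the correct manoeuvre is the reverse: embed $\underline{\Hom}_B(V,V')$ into $\underline{\Hom}_A(\text{something over }A)$ via $M\otimes_B-$ being split-injective after applying $N\otimes_A-$ only if we had the equivalence. Since we only have separable \emph{division}, the honest statement is: $M\otimes_B - : \underline{\Mod}\,B \to \underline{\Mod}\,A$ admits a retraction up to the summand $U$, namely $N\otimes_A-$ composed with projection $A\oplus U \to A$ gives $\mathrm{id}$ on $\underline{\Mod}\,B$ only when $N\otimes_A M\otimes_B V \cong V \oplus (\text{proj})$, which fails in general. Thus I would instead use: $V$ is a direct summand of $N\otimes_A M\otimes_B V$? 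No. The right formulation, and the one matching Linckelmann's Proposition 4.13.17, is that $B$ separably divides $A$ means $N\otimes_A(-)$ followed by $M\otimes_B(-)$ recovers the identity on $A$-modules up to $U\otimes_A(-)$ — wait, that is division of $A$ by $B$ in the direction $M\otimes_B N\cong A\oplus U$. So for an $A$-module $W$, $W \mid M\otimes_B N\otimes_A W$, hence $\underline{\Hom}_A(W,W')$ is a direct summand of $\underline{\Hom}_A(M\otimes_B N\otimes_A W, M\otimes_B N\otimes_A W') \cong \underline{\Hom}_B(N\otimes_A W, N\otimes_A W')$ (the last iso because $M\otimes_B-$ has left adjoint essentially $N$ up to the projectivity hypotheses, or more simply because $M$ is projective one-sided so $M\otimes_B-$ is exact and preserves projectives, and one uses $\mathrm{Hom}_A(M\otimes_B X, Y)\cong \mathrm{Hom}_B(X, \mathrm{Hom}_A(M,Y))$ with $\mathrm{Hom}_A(M,Y)\cong M^*\otimes_A Y$ or directly $N\otimes_A Y$ when $N\cong M^*$; in general one argues with the biadjunction coming from $M,N$ being a separable division pair, which gives $M\otimes_B-\dashv N\otimes_A-$ up to summands). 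Therefore $\pi^d$, which annihilates $\underline{\Hom}_B(-,-)$ between \emph{any} two $B$-modules — in particular between $N\otimes_A W$ and $N\otimes_A W'$ — also annihilates the direct summand $\underline{\Hom}_A(W,W')$. This is exactly the claimed conclusion (with the roles as stated: the hypothesis is about $A$-modules, $d$ annihilates stable Hom over $A$, and we deduce it over $B$ — so I have the direction backwards relative to the lemma statement and must instead run the argument with $A,B$ and $M,N$ swapped throughout: since $B$ separably divides $A$, we have $A$-modules controlled, want $B$-modules controlled, which is the genuinely useful direction and is what the summand argument above delivers once one writes $M\otimes_B N\cong A\oplus U$ and uses $M$ projective as right $B$-module so that $-\otimes_B$ and the adjunctions behave).

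\textbf{Execution and the main obstacle.} So concretely I would: (i) fix $B$-modules $V,V'$; (ii) observe $M\otimes_B V$ and $M\otimes_B V'$ are $A$-modules, so by hypothesis $\pi^d$ annihilates $\underline{\Hom}_A(M\otimes_B V, M\otimes_B V')$; (iii) show there is a split injection $\underline{\Hom}_B(V,V') \hookrightarrow \underline{\Hom}_A(M\otimes_B V, M\otimes_B V')$ — equivalently, that the composite of $M\otimes_B-$ with $N\otimes_A-$ is, on stable categories, a direct summand of the identity functor on $\underline{\Mod}\,B$, which requires precisely the separable division datum $M\otimes_B N\cong A\oplus U$ read on the $B$-side, i.e. one needs $V\mid N\otimes_A M\otimes_B V$; (iv) conclude $\pi^d$ kills the submodule $\underline{\Hom}_B(V,V')$. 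The main obstacle — and the place where the one-sided-projectivity hypotheses on $M,N$ and the structure of the separable division are genuinely used — is step (iii): establishing that $V$ is a stable direct summand of $N\otimes_A M\otimes_B V$ naturally in $V$. This follows by applying the exact functor $N\otimes_A -$ to the $(A,A)$-bimodule split surjection $M\otimes_B N \twoheadrightarrow A$ (split as bimodules), which yields a split surjection $N\otimes_A M\otimes_B N \twoheadrightarrow N$ of $(B,A)$-bimodules, and then tensoring with $- \otimes_A(?)$; but to land back at $V$ one actually wants the dual datum, so the cleanest fix is to also invoke that in the definition of ``separably divides'' one may, after the reductions already made in the paper for symmetric algebras (cf. the remarks following \ref{SEMT}), take $N\cong M^*$, giving an honest adjunction $(M\otimes_B-,\ M^*\otimes_A-)$ with unit $V\to M^*\otimes_A M\otimes_B V$ split by the counit-type map coming from $M^*\otimes_A M \cong B\oplus(\text{something})$; then naturality of unit and counit makes the injection $\underline{\Hom}_B(V,V')\hookrightarrow\underline{\Hom}_A(M\otimes_B V,M\otimes_B V')$ split, and step (iv) finishes. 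I expect the write-up to be short once this adjunction/summand bookkeeping is set up; everything else is formal annihilation of a submodule by $\pi^d$.
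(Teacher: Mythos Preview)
Your proposal contains a genuine gap rooted in a reversed definition. In the paper's conventions, ``$A$ separably divides $B$'' means $A \mid M\otimes_B N$ for an $(A,B)$-bimodule $M$ and a $(B,A)$-bimodule $N$; consequently the hypothesis ``$B$ separably divides $A$'' gives a $(B,A)$-bimodule $M$ and an $(A,B)$-bimodule $N$, projective on both sides, with $M\otimes_A N \cong B\oplus U$ as $(B,B)$-bimodules. You wrote down the opposite relation ($M\otimes_B N \cong A\oplus U$ with $M$ an $(A,B)$-bimodule), noticed midway that the directions did not match, announced a swap, but in your execution section reverted to the wrong setup. You are then forced to produce $N\otimes_A M \cong B\oplus(\text{something})$ from $M\otimes_B N \cong A\oplus U$, and this simply does not follow: separable division is genuinely asymmetric. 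Your attempted rescue via symmetric algebras and $N\cong M^*$ fails on two counts: the lemma does not assume $A,B$ symmetric, and even when they are, $M\otimes_B M^* \cong A\oplus U$ in no way implies $M^*\otimes_A M \cong B\oplus U'$. So step~(iii) of your execution cannot be completed as written.

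With the correct definition, the argument you are reaching for goes through immediately and is exactly what the paper intends (the paper itself gives no independent proof; it cites \cite[Proposition~4.13.17]{Linck18a} and observes that only the one-sided hypothesis ``$B$ separably divides $A$'' is used there). Concretely: the bimodule splitting $B\hookrightarrow M\otimes_A N$ gives, naturally in the $B$-module $V$, a splitting $V \mid M\otimes_A(N\otimes_B V)$. For $f\in\Hom_B(V,V')$, the $A$-map $N\otimes_B f$ has $\pi^d(N\otimes_B f)$ factoring through a projective $A$-module by hypothesis; applying $M\otimes_A-$ (which sends projective $A$-modules to projective $B$-modules since $M$ is left $B$-projective) shows $\pi^d(M\otimes_A N\otimes_B f)$ factors through a projective $B$-module; composing with the natural inclusion and projection recovers $\pi^d f$ as a map factoring through that projective. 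No adjunction bookkeeping and no symmetry assumption are needed once the bimodules are set up in the right order.
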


\begin{proof}
Note that in \cite[Proposition 4.13.17]{Linck18a}, $A,B$ are assumed to be separably equivalent, but the proof there use only the assumption that $B$ separably divides $A$.
So the proof of \cite[Proposition 4.13.17]{Linck18a} applies except some typos which can be corrected easily.
\end{proof}

\begin{thmwot}\label{d_A-sep}
Assume $A$ is an $\cO$-algebra such that $K\otimes_{\cO}A$ is semisimple.
Denote by $d_A$ the smallest positive integer such that $\pi^{d_A}$ annihilates $\UHom_A(U,V)$ for any $A$-module $U,V$ (such $d_A$ exists by \cite[Proposition 4.13.16]{Linck18a}).
Since $p=u\pi^k$ for some $u\in\cO^\times$ and positive integer $k$, there is a smallest power of $p$ annihilating $\UHom_A(U,V)$ for any $A$-module $U,V$.

Assume $A,B$ are two $\cO$-algebras such that $K\otimes_{\cO}A$ and $K\otimes_{\cO}B$ are both semisimple.
By Lemma \ref{lem-sep-div-d}, if $B$ separably divides $A$, then $d_B \leq d_A$.
Thus if $A,B$ are separably equivalent, then $d_A=d_B$.
\hfill $\lrcorner$
\end{thmwot}

\begin{thmwot}\label{proof of mainthm-2}
\begin{proof}[Proof of Theorem \ref{mainthm-2}]
(1.1) just follows from the transitivity of separable equivalences and \cite[Corollary 6.2.4]{Linck18b}.
(1.2) follows from (1.1), Lemma \ref{lem-sep-div-d}, \ref{d_A-sep} and \cite[Corollary 4.13.21]{Linck18a} or follows from Lemma \ref{lem-sep-div-d}, \ref{d_A-sep} and \cite[Corollary 6.2.10]{Linck18b}.

(2.1) By (\ref{setting-SEMT}.\ref{item-N*0-N0-mid}) and (\ref{setting-SEMT}.\ref{item-OP-N*0-N0}), we have that
\[ \cO{P}_2 \mid \Ind_{R^\sharp}^{P_2\times P_1} V^* \otimes_{\cO{P_1}} i_1\cO{G_1}i_1 \otimes_{\cO{P_1}} \Ind_R^{P_1\times P_2} V. \]
Then by \cite[Theorem 8.7.1]{Linck18b}, there is $\varphi_1\colon Q_1 \to P_1 \in\cF_1$ such that
\[ \cO{P}_2 \mid \Ind_{R^\sharp}^{P_2\times P_1} V^* \otimes_{\cO{Q_1}}  \lsub{\varphi_1}\Ind_R^{P_1\times P_2} V. \]
Applying \cite[Theorem 2.7.2]{Linck18a} to the following biadjoint functors:
\begin{align*}
G &= \Ind_{R^\sharp}^{P_2\times P_1} V^* \otimes_{\cO{Q}_1} \colon\ \cO{Q}_1\otimes_{\cO}(\cO{P}_2)^{\op}\!-\!\Mod \to \cO{P}_2\otimes_{\cO}(\cO{P}_2)^{\op}\!-\!\Mod, \\
F &= \Ind_R^{P_1\times P_2} V \otimes_{\cO{P}_2} \colon\ \cO{P}_2\otimes_{\cO}(\cO{P}_2)^{\op}\!-\!\Mod \to \cO{Q}_1\otimes_{\cO}(\cO{P}_2)^{\op}\!-\!\Mod,
\end{align*}
we have that
\[ \cO{P}_2 \mid \Ind_{R^\sharp}^{P_2\times P_1} V^* \otimes_{\cO{Q_1}}  \Ind_R^{P_1\times P_2} V \otimes_{\cO{P}_2} \cO{P}_2 \cong \Ind_{R^\sharp}^{P_2\times P_1} V^* \otimes_{\cO{Q_1}}  \Ind_R^{P_1\times P_2} V. \tag{$\ast$}\]
Thus it follows from Lemma \ref{lem-sep-div-d} and \cite[Corollary 4.13.21]{Linck18a} that $|P_2|\leq|Q_1|\leq|P_1|$.
Symmetric arguments shows that $|P_1|\leq|P_2|$.
So $Q_1=P_1$ and ($\ast$) becomes
\[ \cO{P}_2 \mid \Ind_{R^\sharp}^{P_2\times P_1} V^* \otimes_{\cO{P_1}}  \Ind_R^{P_1\times P_2} V. \tag{$\ast\ast$}\]
Similarly, we have
\[ \cO{P}_1 \mid \Ind_R^{P_1\times P_2} V \otimes_{\cO{P_2}} \Ind_{R^\sharp}^{P_2\times P_1} V^*. \]

(2.2) By Mackey's formula and the surjection of $\pi_1$ and $\pi_2$ in Proposition \ref{prop-first}(2), we have that
\[ \Res^{P_2\times P_1}_{R_2\times P_1} \Ind_{R^\sharp}^{P_2\times P_1} V^* \cong \Ind_{R_2\times R_1}^{R_2\times P_1} V^*,\quad \Res^{P_1\times P_2}_{P_1\times R_2} \Ind_R^{P_1\times P_2} V \cong \Ind_{R_1\times R_2}^{P_1\times R_2} V. \]
Then it follows from ($\ast\ast$) that
\[ \cO{R}_2 \mid V^* \otimes_{\cO{R}_1} \cO{P}_1 \otimes_{\cO{R}_1} V. \]
Then we can argument as in (2.1) to show that
\[ \cO{R}_2 \mid V^* \otimes_{\cO{R}_1} V,\quad \cO{R}_1 \mid V \otimes_{\cO{R}_2} V^*. \qedhere\]
\end{proof}
\end{thmwot}

\begin{rem}
For the field $k$, the above arguments do not apply.
\begin{compactenum}[(1)]
\item
When $kP_1$ and $kP_2$ are separably equivalent, S.F. Peacock shows that $P_1$ and $P_2$ have the same $p$-rank (see \cite[p.225, Theorem 1]{Peac17} and the paragraph after it); but it is not known whether $|P_1|=|P_2|$ holds even for cyclic $p$-groups.
\item
Assume $kG_1b_1$ is stably equivalent of Morita type to $kG_2b_2$ and keep the notation in Theorem \ref{mainthm-1}.
Then as in \ref{proof of mainthm-2}, we can show that
\[ kP_2 \mid \Ind_{R^\sharp}^{P_2\times P_1} V^* \otimes_{kQ_1}  \Ind_R^{P_1\times P_2} V. \]
It follows from Mackey's formula that
\[ \Res^{P_2\times P_1}_{P_2\times Q_1} \Ind_{R^\sharp}^{P_2\times P_1} V^* \cong \Ind_{R^\sharp\cap(P_2\times Q_1)}^{P_2\times Q_1} V^*,\quad \Res^{P_1\times P_2}_{Q_1\times P_2} \Ind_R^{P_1\times P_2} V \cong \Ind_{(Q_1\times P_2)\cap R}^{Q_1\times P_2} V. \]
Then by Bouc's formula, we have that
\[ \Ind_{R^\sharp}^{P_2\times P_1} V^* \otimes_{kQ_1} \Ind_R^{P_1\times P_2} V \cong \Ind_{[R^\sharp\cap(P_2\times Q_1)]*[(Q_1\times P_2)\cap R]}^{P_2\times P_2} \left[V^*\otimes_{k(Q_1\cap R_1)} V\right]. \]
Since as a $(kP_2,kP_2)$-bimodule $kP_2$ has $\Delta{P_2}$ as a vertex, the projections of $[R^\sharp\cap(P_2\times Q_1)]*[(Q_1\times P_2)\cap R]$ to the first and second components must be surjective.
By the definition of the operator ``$*$'' in Bouc's formula, we have that $P_1=Q_1R_1$.
But we do not known whether it can be shown that $Q_1=P_1$. \hfill $\lrcorner$
\end{compactenum}
\end{rem}

%%%%%%%%%%%%%%%%%%%%%%%%%%%%%%%%%%%%%%%%%%%%%%%%%%%%%%%%%
\section{Examples and remarks}\label{exmp-rem}

\begin{exmp}\label{exmp-1}
At least for $k$, there are examples in which stable equivalences of Morita type can have non-trivial $R_1$ and $R_2$ as in Theorem \ref{mainthm-1}.

In fact, there are counterexamples for the so-called \emph{modular isomorphism problem} (MIP) for $p$-groups:
\begin{equation}
\begin{array}{c}
\text{if $P_1,P_2$ are two $p$-groups such that $kP_1 \cong kP_2$ as $k$-algebras,}\\
\text{then does it hold that $P_1 \cong P_2$?}
\end{array}
\end{equation}
The first counterexample is given by Garc\'ia-Lucas, Margolis and del R\'io \cite{G-LMargRio22}.

Note that $kP_1,kP_2$ are themselves blocks, and $kP_1 \cong kP_2$ implies obviously Morita equivalence and thus stable equivalences of Morita type between $kP_1$ and $kP_2$.
Thus subgroups $R_1,R_2$ as in Theorem \ref{mainthm-1} are non-trivial in the counterexample given by Garc\'ia-Lucas, Margolis and del R\'io \cite{G-LMargRio22}, since otherwise $P_1\cong P_2$ would hold.

Note that the counterexample of Garc\'ia-Lucas, Margolis and del R\'io \cite{G-LMargRio22} is also counterexample for the \emph{blockwise modular isomorphism problem} (see \emph{e.g.} Navarro and Sambale \cite[Question 1.1]{NavSam18} for the statement). \hfill $\lrcorner$
\end{exmp}

\begin{exmp}\label{exmp-2}
By a result of Puig \cite[7.4]{Puig99}, the consition $R_1=1=R_2$ in part (7) is equivalent to that $M$ has endopermutation sources.
Thus part (7) is just a result of Puig \cite[7.6]{Puig99};
see \cite[Theorem 9.11.2]{Linck18b} for the statement and a proof:
in fact, when the isomorphism of defect groups is established from the assumption of endopermutation sources in \cite[Theorem 9.11.2]{Linck18b}, the remaining proof for the isomorphism of fusion systems in \cite[Theorem 9.11.2]{Linck18b} does not make further use of the assumption of endopermutation sources.

Note that the counterexample of Garc\'ia-Lucas, Margolis and del R\'io \cite{G-LMargRio22} is also a counterexample for the following question over $k$:
\begin{equation}\label{equ-basic-Morita}
\begin{array}{c}
\text{do all Morita equivalences between blocks of finite groups}\\
\text{have endopermutation sources?}
\end{array}
\end{equation}
(It is open for $\cO$ when $\cO$ is of characteristic $0$.)
Furthermore, there are even Morita equivalence between blocks of finite groups over $k$ with isomorphic defect groups which does not have endopermutation sources.
\emph{Linckelmann gives such a counterexample, which I learn from Dr. X. Huang} and the groups in this example are much smaller than those provided by Garc\'ia-Lucas, Margolis and del R\'io \cite{G-LMargRio22}.
Note first that stable equivalences of Morita type with endopermutation sources perserve vertices of indecomposable modules.
Thus to give an example of Morita equivalence whose sources are not endopermutation modules, it suffices to construct a Morita equivalence which does not preserve vertices of indecomposable modules.
Then \emph{the example given by Linckelmann is as follows}:
let $G$ be the Klein four group, $p=2$ and $|k|\geq4$;
the indecomposable $2$-dimensional $kG$-modules are parametrized by a projective line (\cite[Theorem 7.2.1(iii)]{Linck18b});
since indecomposable $2$-dimensional $kG$-modules with vertices of order $2$ must be of trivial source, there are only three such indecomposable modules corresponding to the three subgroups of $G$ of order $2$;
the paragraph after \cite[Theorem 7.2.1]{Linck18b} shows that the automorphism group of $kG$ permutes transitively the indecomposable $2$-dimensional $kG$-modules;
thus certain Morita equivalence induced by a suitable automorphism of $kG$ should send an indecomposable $2$-dimensional $kG$-module with vertex of order $2$ to an indecomposable $2$-dimensional $kG$-module with vertex $G$.
\end{exmp}

\begin{thmwot}\label{Ques-R1R2}
Finally, we propose some question related to Theorem \ref{mainthm-1}.
\begin{compactenum}[(\ref{Ques-R1R2}.1)]
\item
Does it hold that $R_1$ and $R_2$ are indeed strongly closed in $\cF_1$ and $\cF_2$ resp.?
If this assertion does hold, then by \cite[Proposition 5.19]{Craven11}, part (6) of Theorem \ref{mainthm-1} can be reformulated as
\[ N_{\cF_1}(R_1)/R_1 \cong N_{\cF_2}(R_2)/R_2. \]
So is there any local meaning of the above isomorphism?
\item
The minimal possibility for $R_1,R_2$ is of course $R_1=1=R_2$, and by a result of Puig \cite[7.4]{Puig99}, this is equivalent to that the stable equivalence of Morita type has endopermutation sources.
So a natural question is: what is the maximal possibility of $R_1$ and $R_2$?
In particular, can the situation in which $R_1=P_1$ and $R_2=P_2$ happen?
\item
Fixing two blocks $b_1$ and $b_2$, among all stable equivalences of Morita type (Morita equivalences) between them, the choice of $R$ is unique up to conjugacy or not?
And if this uniqueness does not hold, what can we say about the set of all possible $R$?
\end{compactenum}
Note that if the problem (\ref{equ-basic-Morita}) holds for $\cO$ when $\cO$ is of characteristic $0$, then in this case we always have $R_1=1=R_2$ and the above question become trivial.
But by the counterexamples in \ref{exmp-1} and \ref{exmp-2}, the above questions make sense at least for $k$.
\end{thmwot}

%%%%%%%%%%%%%%%%%%%%%%%%%%%%%%%%%%%%%%%%%%%%%%%%%%%%%%%%%
\section*{Acknowledgement}
The author thank Prof. Z. Feng for his invitation for a stay at Shenzhen International Center for Mathematics, Southern University of Science and Technology (SUSTech) in the summer of 2025, where this work was completed.
The author is extremely grateful to Prof. M. Linckelmann for helpful comments, and to Prof. X. Huang for fruitful disscusions.
%I am also indebted to the anonymous referee for many helpful comments.

%%%%%%%%%%%%%%%%%%%%%%%%%%%%%%%%%%%%%%%%%%%%%%%%%%%%%%%%%

\end{document}